\documentclass[12pt,oneside,reqno]{amsart}

\usepackage{hyperref}

\usepackage[headsep=30pt,footskip=30pt,twoside=false,bottom=2.5cm]{geometry}

\usepackage[all]{xy}

\xymatrixcolsep{2cm}


\numberwithin{equation}{section}

\allowdisplaybreaks[1]

\usepackage{etoolbox}

\makeatletter

\patchcmd{\thesubsection}{\arabic}{\arabic}{}{}

\patchcmd{\@seccntformat}{\@secnumfont}{%
  \@secnumfont\expandafter\protect\csname format#1\endcsname}{}{}


\patchcmd{\@startsection}{\@afterindenttrue}{\@afterindentfalse}{}{}

\patchcmd{\subsection}{-.5em}{.3\linespacing}{}{}

\makeatother


\theoremstyle{plain}

\newtheorem{theorem}{Theorem}[section]

\newtheorem{lemma}[theorem]{Lemma}

\theoremstyle{remark}

\newtheorem{remark}[theorem]{Remark}








\newcommand{\Ker}[1]{\ensuremath{\mathrm{Ker} (#1)}}


\newcommand{\At}[1]{\ensuremath{\mathrm{At} (#1)}}


\newcommand{\ENd}[1]{\ensuremath{\mathrm{End}  (#1)}}



\newcommand{\cat}[1]{\ensuremath{\mathcal{#1}}}


\newcommand{\at}[2][]{\ensuremath{\mathrm{at}_{#1} (#2)}}


\newcommand{\id}[1]{\ensuremath{\mathbf{1}_{#1}}}


\newcommand{\rk}[2][]{\ensuremath{\mathrm{rk}_{#1}(#2)}}



\newcommand{\Z}{\ensuremath{\mathbb{Z}}}














\newcommand{\tr}[1]{\ensuremath{\mathrm{tr}(#1)}}



\newcommand{\struct}[1]{\ensuremath{\mathcal{O}_{#1}}}






  

\newcommand{\Diff}[4][]{\ensuremath{\mathrm{Diff}^{#1}_{#2}(#3,#4)}}













\newcommand{\coh}[3]{\ensuremath{\mathrm{H}^{#1}(#2,#3)}}




\renewcommand{\bar}[1]{\ensuremath{\overline{#1}}}

\begin{document}

\title[A criterion for the logarithmic connections over a  perfect field]{A criterion for the existence of logarithmic connections on curves over a perfect field}

\author[S. Manikandan]{S. Manikandan}

\address{Indian Institute of Science Education and Research, Tirupati \\
C/o Sree Rama Engineering College (Transit Campus),
Karakambadi Road, Mangalam (P.O.) Tirupati 517507.
Andhra Pradesh, India}

\email{manimaths87@gmail.com}

\author[A. Singh]{Anoop Singh}

\address{School of Mathematics \\ Tata Institute of Fundamental Research \\
Homi Bhabha Road \\ Mumbai 400~005 \\ India }

  \email{anoops@math.tifr.res.in}

\subjclass[2010]{53B15, 14H60}

\keywords{perfect field, logarithmic connections, Atiyah-Weil criterion}

\begin{abstract}
Let $k$ be a perfect field, and $X$ an irreducible 
smooth projective curve over $k$. We give a criterion for a vector bundle over $X$ to admit
a logarithmic connection singular over a finite subset 
of $X$ with given residues, where residues are assumed to be rigid.
\end{abstract}

\maketitle

\section{Introduction and statements of the results}
Let $X$ be a compact Riemann surface. A famous theorem due to Atiyah \cite{A} and Weil \cite{W}, which is known together as the Atiyah-Weil criterion, says that a holomorphic vector bundle over a $X$ admits a
holomorphic connection if and only if the degree of each indecomposable component of the holomorphic
vector bundle is zero (see \cite{BN} for an exposition of the Atiyah-Weil criterion). In \cite{BS} and 
\cite{B06}, the Atiyah-Weil criterion has been 
generalised for the smooth projective curve over infinite perfect field,
and perfect field, respectively.
 In \cite{B},
a criterion for the existence of a logarithmic connection with prescribed residues has been 
established, and hence generalising the Atiyah-Weil criterion in logarithmic set up. More precisely,
let 
$S = \{x_1, \ldots, x_m\}$ be a subset of $X$ such that 
$x_i \neq x_j$ for all $i \neq j$, and let $E$ be a holomorphic 
vector bundle over $X$. 
 Fix  a {\bf rigid} 
endomorphism $A(x) \in \ENd{E(x)}$ for every $x \in S$, where $E(x)$ denote the fibre of $E$ over $x \in S$.  Then, we have the following.
\begin{theorem}\cite[Theorem 1.3]{B}
\label{thm:1}
The vector bundle $E$ admits a logarithmic connection
singular over $S$ with residues $A(x)$ at every $x \in S$ if and only if for every direct summand $F \subset E$,
\begin{equation}
\label{eq:1}
\deg F + \sum_{x \in S} \tr{A(x)\vert_{F(x)}} = 0,
\end{equation}
where $F(x)$ denote the fibre of $F$ over $x \in S$.
\end{theorem}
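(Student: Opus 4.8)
The plan is to follow Atiyah's original obstruction-theoretic argument, in the logarithmic form used in \cite{B}, feeding in the perfect-field Atiyah--Weil criterion of \cite{B06} as the main non-formal ingredient. \emph{Step 1: the obstruction class.} I would first reformulate the problem as the vanishing of a cohomology class. Choose a finite open cover $\{U_\alpha\}$ of $X$ trivialising $E$, each $U_\alpha$ meeting $S$ in at most one point. On each $U_\alpha$ pick a logarithmic connection $\nabla_\alpha$ on $\restrict{E}{U_\alpha}$ singular over $S\cap U_\alpha$ with the prescribed residue (such a connection always exists locally: take $d+\tilde A(x)\,dt/t$ in a frame, with $\tilde A(x)$ a matrix of $A(x)$, and the trivial connection away from $S$). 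On $U_\alpha\cap U_\beta$ the difference $\omega_{\alpha\beta}:=\nabla_\alpha-\nabla_\beta$ is $\struct{X}$-linear, and because $\nabla_\alpha$ and $\nabla_\beta$ have the same residue at a point of $S$ in the overlap, $\omega_{\alpha\beta}$ has vanishing residue there, hence is a pole-free section of $\END{E}\otimes\Omega^1_X$. The resulting cocycle $\{\omega_{\alpha\beta}\}$ determines a well-defined class $\Phi(E,A)\in\coh{1}{X}{\END{E}\otimes\Omega^1_X}$, and $E$ admits a logarithmic connection singular over $S$ with residues $A(x)$ if and only if $\Phi(E,A)=0$: a global such connection kills all $\omega_{\alpha\beta}$, and conversely trivialising the cocycle amounts to correcting the $\nabla_\alpha$ by pole-free (hence residue-preserving) sections of $\END{E}\otimes\Omega^1_X$ until they glue.

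\emph{Step 2: identifying $\Phi$ and pairing with direct summands.} Comparing with the choice of all residues equal to $0$ — for which the $\nabla_\alpha$ are the trivial connections, and a logarithmic connection with vanishing residues is a holomorphic connection — one gets $\Phi(E,0)=\mathrm{at}(E)$, the Atiyah class, and in general $\Phi(E,A)=\mathrm{at}(E)+\partial(A)$, where $\partial$ is the connecting homomorphism of
\[0\to\END{E}\otimes\Omega^1_X\to\END{E}\otimes\Omega^1_X(S)\to\bigoplus_{x\in S}\ENd{E(x)}\to 0 .\]
Serre duality identifies $\coh{1}{X}{\END{E}\otimes\Omega^1_X}\cong\coh{0}{X}{\END{E}}^{*}$ through the trace form on $\END{E}$ and the trace map $\coh{1}{X}{\Omega^1_X}\to k$. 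For a direct summand $F\subset E$ with idempotent $p_F\in\coh{0}{X}{\END{E}}$, functoriality of the Atiyah class for a splitting $E=F\oplus F'$ gives $\pair{\mathrm{at}(E)}{p_F}=\deg F$, and compatibility of Serre duality with $\partial$ (a residue-theorem computation) gives $\pair{\partial(A)}{p_F}=\sum_{x\in S}\tr{A(x)\vert_{F(x)}}$, so that
\[\pair{\Phi(E,A)}{p_F}=\deg F+\sum_{x\in S}\tr{A(x)\vert_{F(x)}} .\]

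\emph{Step 3: the two implications.} The ``only if'' direction is then immediate: $\Phi(E,A)=0$ pairs to $0$ with every $p_F$, which is \eqref{eq:1}. For ``if'', assume \eqref{eq:1} for all direct summands, i.e.\ $\Phi(E,A)$ pairs to $0$ with every idempotent of the finite-dimensional $k$-algebra $\coh{0}{X}{\END{E}}$; one must deduce $\Phi(E,A)=0$. Here I would take the Krull--Schmidt decomposition $E=\bigoplus_i E_i$ into indecomposables (valid over $k$, each $\coh{0}{X}{\END{E_i}}$ being a local ring) and reduce to $E$ indecomposable, using that a direct sum of logarithmic connections with the correct residues on the $E_i$ yields one on $E$ — with attention to the fact that $A(x)$ need not respect the decomposition, a point requiring some bookkeeping. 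For $E$ indecomposable, $\coh{0}{X}{\END{E}}=D\oplus N$ with $D$ a finite-dimensional division $k$-algebra (perfectness of $k$ entering here, e.g.\ for this splitting) and $N$ the radical; the only non-trivial instance of \eqref{eq:1} is $\deg E+\sum_{x\in S}\tr{A(x)}=0$, and one must show this forces $\Phi(E,A)=0$ in $\coh{0}{X}{\END{E}}^{*}$. Its restriction to $D$ is exactly the perfect-field Atiyah--Weil statement of \cite{B06} together with the residue contribution; the vanishing of $\Phi(E,A)$ along $N$ is where the \textbf{rigidity} of the residues enters essentially — rigidity is what rules out a component of the obstruction along the nilpotent endomorphisms, which \eqref{eq:1} cannot detect.

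\emph{Main obstacle.} I expect the genuinely hard step to be this last one: showing that, for rigid residues, the obstruction has no component along $N=\mathrm{rad}\,\coh{0}{X}{\END{E}}$ — equivalently, that the residue map from the affine space of logarithmic connections with poles on $S$ into $\bigoplus_{x\in S}\ENd{E(x)}$ has the image predicted by \eqref{eq:1} on the rigid locus. Over $\C$ this is the substance of \cite[Theorem~1.3]{B}; the new features over a perfect field are that the ``trace detects the obstruction'' arguments must be run with the reduced trace of $D$ rather than with $k$ itself, and that one should either argue directly over $k$ or base-change to $\bar{k}$ and descend, checking Galois-equivariance of $\Phi(E,A)$, of the Serre-duality pairing and of the Krull--Schmidt decomposition — routine but requiring care. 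By contrast, Steps 1 and 2 are formal and transfer from the complex case with essentially no change.
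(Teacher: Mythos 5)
Your Steps 1--2 are sound and coincide with the structure of the argument in \cite{B} that this paper itself reuses (see the proof of Theorem \ref{thm:2}): the obstruction is the extension class $\phi^A_E$ of the sequence \eqref{eq:14}, it decomposes as $\at{E}$ plus the connecting-homomorphism contribution of the residues as in \eqref{eq:22}, and under Serre duality it pairs with the projector onto a direct summand $F$ to give $\deg F+\sum_{x\in S}\tr{A(x)\vert_{F(x)}}$. One small correction to your Step 3: no ``bookkeeping'' is needed for the Krull--Remak--Schmidt reduction, because rigidity of $A(x)$ is exactly the hypothesis that forces $A(x)(E^i(x))\subset E^i(x)$ for each indecomposable summand $E^i$ (this is Lemma \ref{lemm:3} here). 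Also, since the statement is about a compact Riemann surface, the apparatus you invoke for perfect fields (division algebras, reduced traces, Galois descent, \cite{B06}) is beside the point: for $E$ indecomposable one simply has $\coh{0}{X}{\ENd{E}}=\C\cdot\id{E}\oplus N$ with $N$ the nilpotent part.

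The genuine gap is that you stop exactly at the decisive step and label it the ``main obstacle'': showing that the dual class $\widetilde{\phi^A_E}$ vanishes on every nilpotent $N\in\coh{0}{X}{\ENd{E}}$. Condition \eqref{eq:1} only controls the pairing with idempotents, so without this step the ``if'' direction is not proved, and it is not something to be outsourced --- it is the content of the theorem where rigidity is used. The argument that closes it (in \cite{B}, and reproduced in this paper's proof of Theorem \ref{thm:2}) is short and concrete: $\widetilde{\at{E}}(N)=0$ by Atiyah \cite[Proposition 18(ii)]{A}; $\tr{A(x)\circ N(x)}=0$ because rigidity makes $A(x)$ commute with $N(x)$, so $A(x)\circ N(x)$ is nilpotent; and for the logarithmic class itself one uses the flag $\Ker{N^j}$, $j\ge 1$, which is $A(x)$-invariant again by rigidity, to show that $\phi^A_E$ lifts to $\coh{1}{X}{\ENd{E}^0\otimes\Omega^1_X}$ for the flag-preserving subalgebra $\ENd{E}^0$, whence $\widetilde{\phi^A_E}(N)=0$. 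With that supplied, Serre duality gives $\phi^A_E=0$ from $\widetilde{\phi^A_E}(\id{E})=0$ alone, completing the proof; as written, your proposal asserts the conclusion precisely where the work lies.
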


The proof in \cite[Theorem 1.3]{B} will work for vector 
bundles on a smooth projective curve defined over 
an algebraically closed field of characteristic zero.

Motivated by the above discussion, we have problems related to 
existence of logarithmic connections when the curve 
is over an algebraically closed field of characteristic 
$p > 0$, a prime number.
Also, what will be the suitable criterion  when the base 
field $k$ fails to be an 
algebraically closed field. 
 
In case $k$ is an  algebraically closed field of characteristic 
$p > 0$, and $X$ is an irreducible smooth projective 
curve over $k$, we prove the following (see section \ref{alg. closed field} 
Theorem \ref{thm:2}).
\begin{theorem}
\label{thm:0.2}
Let $E$ be an algebraic vector bundle on $X$ defined over algebraically closed field $k$ of characteristic 
$p > 0$. Then $E$ admits a logarithmic connection
singular over $S$ with residue $A(x)$ for every 
$x \in S$ if and only if every indecomposable component 
$F$ of $E$ satisfies the following condition
\begin{equation}
\label{eq:0.15}
\deg{F} + \sum_{x \in S} \tr{A(x)\vert_{F(x)}} \equiv 0 ~(\mbox{mod}~p),
\end{equation}
that is, the number $\deg{F} + \sum_{x \in S} \tr{A(x)\vert_{F(x)}} \in k$ is a multiple of $p$.
\end{theorem}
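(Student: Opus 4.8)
The plan is to follow the proof of \cite[Theorem~1.3]{B} (our Theorem~\ref{thm:1}) closely, carried out in the algebraic category over $k$, using \cite{B06} to replace the single step at which \cite{B} invokes characteristic zero. One begins with the logarithmic Atiyah exact sequence
\[
0 \longrightarrow \END{E} \longrightarrow \mathrm{At}_S(E) \longrightarrow T_X(-\log S) \longrightarrow 0,
\]
where $\mathrm{At}_S(E)$ is the sheaf of first-order differential operators on $E$ whose symbol is a logarithmic vector field along $S$, together with its residue homomorphisms $\mathrm{At}_S(E)\to\END{E(x)}$, $x\in S$; this construction and the notion of the residue of a logarithmic connection are purely algebraic and valid over any field. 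A logarithmic connection on $E$ singular over $S$ with residue $A(x)$ at every $x\in S$ is the same thing as an $\struct X$-linear splitting of this sequence whose residue at each $x$ equals $A(x)$, and the difference of two such splittings is a global section of $\HOM[\struct X]{E}{E\otimes\Omega^1_X}$ (matching residues removes the pole). Since such a connection exists locally on $X$ — on a trivialising open around $x\in S$ take $\nabla=\diff+A(x)\,\diff z/z$, and $\nabla=\diff$ away from $S$ — the sheaf of such connections is a torsor under $\HOM[\struct X]{E}{E\otimes\Omega^1_X}$, so there is a well-defined obstruction class
\[
\Phi \;=\; \Phi\bigl(E,\{A(x)\}\bigr)\ \in\ \coh{1}{X}{\HOM[\struct X]{E}{E\otimes\Omega^1_X}},
\]
and $E$ admits the desired logarithmic connection if and only if $\Phi=0$. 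This is the only ``global'' ingredient and it is insensitive to the characteristic of $k$.

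Next one reduces to the case of $E$ indecomposable. By the Krull--Schmidt theorem for vector bundles on $X$, which holds over any field, $E\cong\bigoplus_i F_i$ with each $F_i$ indecomposable; at this point the rigidity of the residues is used, as it forces each $A(x)$ to respect this decomposition, so that $\END{E}$, the logarithmic Atiyah sheaf, the residues and hence $\Phi$ all split compatibly, and the off-diagonal terms $\HOM[\struct X]{F_i}{F_j}$ ($i\neq j$) are disposed of exactly as in \cite{B}. Thus $\Phi=0$ if and only if $\Phi\bigl(F_i,\{\restrict{A(x)}{F_i(x)}\}\bigr)=0$ for every $i$. Since $k$ is algebraically closed, each $\END{F_i}$ is a local ring equal to $k\cdot\id{F_i}$ plus its nilpotent radical, and one shows — this is the step where \cite{B} uses characteristic zero, the analogous assertion for ordinary connections over a perfect field being established in \cite{B06} — that for an indecomposable bundle the obstruction vanishes if and only if its image $\tr{\Phi}\in\coh{1}{X}{\Omega^1_X}$ under the trace homomorphism $\END{F_i}\to\struct X$ vanishes. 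That image is itself the obstruction $\Phi\bigl(\det F_i,\{\tr{\restrict{A(x)}{F_i(x)}}\}\bigr)\in\coh{1}{X}{\Omega^1_X}$ to a logarithmic connection on the line bundle $\det F_i$ with the indicated scalar residues.

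It remains to compute, for a line bundle $L$ and scalars $a_x\in k$ ($x\in S$), the obstruction to a logarithmic connection on $L$ singular over $S$ with residue $a_x$ at $x$. By Serre duality this obstruction lies in $\coh{1}{X}{\Omega^1_X}\cong k$, and the residue theorem for meromorphic connections on line bundles (the identity $\sum_{x\in S}\mathrm{Res}_x(\nabla)=-\deg L$ in $k$) shows it vanishes precisely when $\deg L+\sum_{x\in S}a_x=0$ in $k$. Taking $L=\det F_i$, $a_x=\tr{\restrict{A(x)}{F_i(x)}}$ and using $\deg(\det F_i)=\deg F_i$, this obstruction vanishes exactly when
\[
\deg F_i+\sum_{x\in S}\tr{\restrict{A(x)}{F_i(x)}}=0\ \text{ in }k,
\]
i.e.\ when the integer $\deg F_i+\sum_{x\in S}\tr{\restrict{A(x)}{F_i(x)}}$ is a multiple of $p$, which is \eqref{eq:0.15}. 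Letting $i$ vary gives the theorem; the equivalence with the reformulation ``every direct summand of $E$ satisfies \eqref{eq:0.15}'' follows from additivity of degree and of the (well-defined, by rigidity) fibrewise traces over the Krull--Schmidt decomposition.

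The hard part is the equivalence, for $E$ indecomposable, between $\Phi=0$ and $\tr{\Phi}=0$. In characteristic zero one splits $\END{E}=\struct X\cdot\id{E}\oplus\mathfrak{sl}(E)$ by dividing by $\rk{E}$ and treats the traceless part directly; when $p\mid\rk{E}$ this route is closed, and one must instead run Atiyah's inductive argument through the filtration of $\END{E}$ by powers of its nilpotent radical, controlling at each stage the cohomology of a direct sum of degree-$0$ line bundles — as in \cite{B06} — and combine this with the logarithmic framework and residue bookkeeping of \cite{B}. A secondary point to watch is that the degree must everywhere be read through $\Z\to k$, so that $\coh{1}{X}{\Omega^1_X}\cong k$ and the residue identity are used in their characteristic-free form, and that the local existence of logarithmic connections with prescribed residues and the torsor structure are unaffected by positive characteristic, being local and $\struct X$-linear.
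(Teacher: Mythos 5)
Your overall architecture --- an obstruction class for logarithmic connections with prescribed residues, reduction to indecomposable summands via Krull--Schmidt and rigidity, and identification of the trace of the obstruction with the obstruction for $\det F$ with scalar residues, hence with $\deg{F}+\sum_{x\in S}\tr{\restrict{A(x)}{F(x)}}\in k$ --- matches the paper's, and those parts are sound. The genuine gap is the step you yourself label the hard part: for $E$ indecomposable over $k$ of characteristic $p>0$ (in particular when $p\mid\rk{E}$), the equivalence between vanishing of the full obstruction $\Phi$ and vanishing of its trace. You do not prove it; you only propose to ``run Atiyah's inductive argument through the filtration of $\END{E}$ by powers of its nilpotent radical, as in \cite{B06}''. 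That plan is never carried out, and it is exactly where the logarithmic data must be handled: at each stage of such an induction one needs the residues $A(x)$ to be compatible with the filtration, which is a substantive use of rigidity your sketch never makes. Moreover, the positive-characteristic, algebraically closed statement you would import from \cite{B06}/\cite{BS} is, for $S=\emptyset$, essentially the theorem being proved, so invoking it without adaptation to the logarithmic setting is close to circular.

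For comparison, the paper closes this gap by a duality computation rather than an induction. Via Serre duality the extension class $\phi^A_E$ of \eqref{eq:14} becomes a functional $\widetilde{\phi^A_E}$ on $\coh{0}{X}{\END{E}}$, and one proves the identity $\widetilde{\phi^A_E}=\widetilde{\at{E}}+\sum_{x\in S}\widetilde{\delta_x}(A(x))$, where $\widetilde{\delta_x}(\beta)(\gamma)=\tr{\beta\circ\gamma(x)}$. Since $k$ is algebraically closed and $E$ indecomposable, every global endomorphism is $\nu\,\id{E}+N$ with $N$ nilpotent; then $\widetilde{\at{E}}(N)=0$ by Atiyah, $\tr{A(x)\circ N(x)}=0$ because rigidity makes $A(x)$ commute with $N(x)$ so the composite is nilpotent, and $\widetilde{\phi^A_E}(N)=0$ by showing the class comes from endomorphisms preserving the flag $\Ker{N^j}$, which $A(x)$ preserves again by rigidity. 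Hence the obstruction vanishes if and only if $\widetilde{\phi^A_E}(\id{E})=\deg{E}+\sum_{x\in S}\tr{A(x)}$ vanishes in $k$, with no hypothesis on $\rk{E}$ modulo $p$ and no filtration induction. Until you either carry out your inductive plan in the logarithmic setting (with the rigidity bookkeeping it requires) or substitute an argument of this kind, the central equivalence --- and with it the theorem --- remains unproved in your write-up.
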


By the abuse of notation, we denote the image of $\deg{E} \in \Z$ under the 
morphism 
$$\Z \longrightarrow \Z/{p\Z} \hookrightarrow k$$
by $\deg{E}$ itself, and this is used throughout the paper.

We also show the following result (see section 
 \ref{perfect field},  Theorem \ref{thm:3}), and 
 this will generalise  \cite[Theorem 1.1]{B} in the logarithmic framework.
\begin{theorem}
\label{thm:0.3}
Let $k$ be a perfect field of characteristic $p$. Let $E$ be a vector bundle 
on an irreducible smooth projective curve $X$
over $k$. Then, we have 
\begin{enumerate}
\item \label{0.a}
Assume that $p >0$, and suppose that 
rank of each indecomposable components of $E$ is not 
divisible by $p$. Then $E$ admits a logarithmic connection singular over $S$ with residue $A(x)$ for 
every $x \in S$ if and only if for every indecomposable 
component $F$ of $E$ satisfies
\begin{equation}
\label{eq:0.23}
\deg{F} + \sum_{x \in S} \tr{A(x)\vert_{F(x)}} \equiv 0 ~(\mbox{mod}~p),
\end{equation}
\item \label{0.b} If for every indecomposable component $F$ of $E$
satisfies 
\begin{equation}
\label{eq:0.24}
\deg{F} + \sum_{x \in S} \tr{A(x)\vert_{F(x)}} = 0 ,
\end{equation}
then $E$ admits a logarithmic connection singular 
over $S$ with residue $A(x)$ for every $x \in S$.
\end{enumerate}
\end{theorem}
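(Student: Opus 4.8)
The plan is to reduce both assertions to the case of an algebraically closed ground field, where the statement is already available: over an algebraically closed field of characteristic $p>0$ it is Theorem~\ref{thm:2}, and over an algebraically closed field of characteristic $0$ it is \cite[Theorem~1.3]{B} together with the remark recorded after Theorem~\ref{thm:1}. Fix an algebraic closure $\bar k$ of $k$; since $k$ is perfect $\bar k/k$ is Galois, with group $\Gamma:=\mathrm{Gal}(\bar k/k)$ and $k=\bar k^{\Gamma}$. Write $X_{\bar k}=X\times_k\bar k$, let $E_{\bar k}$ be the pullback of $E$, and let $S_{\bar k}$, with residues $A(x')$ for $x'\in S_{\bar k}$, be the induced singular locus and residues (each $x\in S$ splits into $[k(x):k]$ points of $S_{\bar k}$). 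It suffices to establish two things: (I) $E$ admits a logarithmic connection singular over $S$ with residues $A(x)$ if and only if $E_{\bar k}$ admits one singular over $S_{\bar k}$ with residues $A(x')$; and (II) under the respective hypotheses, the criterion over $k$ for the indecomposable components of $E$ — \eqref{eq:0.23} in case~\eqref{0.a}, \eqref{eq:0.24} in case~\eqref{0.b} — is equivalent to the corresponding criterion over $\bar k$ for the indecomposable components of $E_{\bar k}$.

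Step (I) is formal. With the residues fixed, the logarithmic connections on $E$ singular over $S$ with those residues, if any exist, form a torsor under $\coh{0}{X}{\END{E}\otimes\Omega_X}$: the difference of two of them is a global section of $\END{E}\otimes\Omega_X(\log S)$ with vanishing residue along $S$, hence a section of $\END{E}\otimes\Omega_X$. Sheafifying — and using that a local logarithmic connection with any prescribed residue exists near each point of $S$ — these connections are the global sections of a torsor sheaf under $\END{E}\otimes\Omega_X$, and so exist if and only if an obstruction class $\vartheta(E)\in\coh{1}{X}{\END{E}\otimes\Omega_X}$ vanishes. This class is compatible with the flat base change $\mathrm{Spec}\,\bar k\to\mathrm{Spec}\,k$, and a vector in the finite-dimensional $k$-vector space $\coh{1}{X}{\END{E}\otimes\Omega_X}$ vanishes if and only if its image in $\coh{1}{X}{\END{E}\otimes\Omega_X}\otimes_k\bar k=\coh{1}{X_{\bar k}}{\END{E_{\bar k}}\otimes\Omega_{X_{\bar k}}}$ does; this proves (I). (Equivalently: a torsor under a vector group over a field is trivial. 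Note the naive remedy of averaging a connection over $\Gamma$ is not available, as a finite separable extension in characteristic $p$ may have degree divisible by $p$.)

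For step (II) write $E=\bigoplus_i E_i$ with each $E_i$ indecomposable over $k$. Then $\End[k]{E_i}$ is a local $k$-algebra; let $D_i$ be its residue division algebra, $Z_i$ the centre of $D_i$, $d_i^2=[D_i:Z_i]$ and $N_i=[Z_i:k]$, the extension $Z_i/k$ being separable because $k$ is perfect. Since $\End[\bar k]{E_i\otimes_k\bar k}=\End[k]{E_i}\otimes_k\bar k$ and $D_i\otimes_k\bar k\cong M_{d_i}(\bar k)^{N_i}$, there is a decomposition $E_i\otimes_k\bar k\cong\bigoplus_{l=1}^{N_i}W_{i,l}^{\oplus d_i}$ into $\bar k$-indecomposables $W_{i,l}$ that are permuted transitively by $\Gamma$; in particular they all have the same rank, so $d_iN_i$ divides $\mathrm{rank}(E_i)$. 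Degree and fibrewise trace are additive and unchanged by extension of scalars, and the rigidity of the residues guarantees that each quantity $q_{i,l}:=\deg{W_{i,l}}+\sum_{x'\in S_{\bar k}}\tr{A(x')\vert_{W_{i,l}(x')}}$ is defined and lies in $k$; since the $q_{i,l}$ are then permuted by $\Gamma$ inside $k=\bar k^{\Gamma}$, they are all equal to a common value $q_i$, and
\begin{equation*}
\deg{E_i}+\sum_{x\in S}\tr{A(x)\vert_{E_i(x)}}\ =\ d_i\,N_i\,q_i .
\end{equation*}
(Points of $S$ whose residue field properly contains $k$ enter via $\mathrm{tr}_{k(x)/k}$ and split under base change; the resulting bookkeeping of degrees and traces is routine.)

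The theorem now follows, the indecomposable components of $E_{\bar k}$ being precisely the $W_{i,l}$. For part~\eqref{0.a}: if $E$ admits the connection then so does $E_{\bar k}$, so $q_{i,l}\equiv 0\pmod p$ for all $i,l$ by Theorem~\ref{thm:2}, whence $\deg{E_i}+\sum_x\tr{A(x)\vert_{E_i(x)}}=d_i\sum_l q_{i,l}\equiv 0\pmod p$; conversely, the hypothesis that $p$ does not divide $\mathrm{rank}(E_i)$ makes $d_iN_i$ prime to $p$, so \eqref{eq:0.23} for $E_i$ forces $q_i\equiv 0\pmod p$, i.e. \eqref{eq:0.15} for every $W_{i,l}$, and Theorem~\ref{thm:2} together with step (I) produces the connection. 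For part~\eqref{0.b}: \eqref{eq:0.24} for $E_i$ is the honest identity $d_iN_iq_i=0$, so $q_i=0$, so every $W_{i,l}$ satisfies the corresponding equality; the algebraically closed case supplies a logarithmic connection on $E_{\bar k}$, which step (I) descends to $E$. The main difficulty is step (II): pinning down the geometric indecomposable decomposition of each $E_i$ — the multiplicity $d_i$ and the number $N_i$ of Galois-conjugate summands — and hence the identity $\deg{E_i}+\sum_x\tr{A(x)\vert_{E_i(x)}}=d_iN_iq_i$ together with the divisibility $d_iN_i\mid\mathrm{rank}(E_i)$, on which the force of the coprimality hypothesis of part~\eqref{0.a} entirely rests.
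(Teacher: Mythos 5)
Your route is genuinely different from the paper's: the paper never base-changes $E$ itself to $\bar{k}$ with the pulled-back residues, but instead invokes the Tillmann/Arason--Elman--Jacob theorem to write $E\cong\pi_*\cat{E}$ with $\cat{E}$ absolutely indecomposable on $X_K$ for a finite extension $K/k$, chooses residues $B(y)$ upstairs, passes from $K$ to $\bar{K}$ only at the level of the extension class of \eqref{eq:14}, and then pushes the connection down via Lemma \ref{lemm:4} and Remark \ref{rem:2}. Your Step (I) is correct and is essentially the paper's own cohomological mechanism: existence with prescribed residues is the vanishing of the class of \eqref{eq:14} in $\coh{1}{X}{\ENd{E}\otimes\Omega^1_X}$, which is compatible with flat base change, and a vector in a finite-dimensional $k$-space vanishes iff it does after $\otimes_k\bar{k}$. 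Your structural analysis of $\End[k]{E_i}$ (local ring, residue division algebra, $E_i\otimes_k\bar{k}\cong\bigoplus_l W_{i,l}^{\oplus d_i}$ with a transitive Galois permutation of the $N_i$ isotypic blocks) is also fine for $k$ perfect.

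The gap is in Step (II), precisely at the assertion that rigidity forces each $q_{i,l}=\deg{W_{i,l}}+\sum_{x'}\tr{A(x')\vert_{W_{i,l}(x')}}$ to lie in $k$, whence all $q_{i,l}$ coincide and $\deg{E_i}+\sum_{x}\tr{A(x)\vert_{E_i(x)}}=d_iN_iq_i$. Rigidity over $k$ only gives that $A(x')$ preserves the fibres of the geometric summands; the numbers $\tr{A(x')\vert_{W_{i,l}(x')}}$ are Galois conjugates in $\bar{k}$ and need not lie in $k$, so they need not be equal, and the $k$-rational criterion for $E_i$ does not descend to each $W_{i,l}$. Concretely, let $K=k(\sqrt{d})$ be a separable quadratic extension, $X$ of genus at least one with a $k$-rational point $x$, $L$ a degree-zero line bundle on $X_K$ with $L\not\cong{}^{\sigma}L$, and $E=\pi_*L$: then $E$ is indecomposable, $\coh{0}{X}{\ENd{E}}=K$ acts on $E(x)\cong K$ by multiplication, and $A(x)=$ multiplication by $\sqrt{d}$ is rigid with $\tr{A(x)}=\mathrm{Tr}_{K/k}(\sqrt{d})=0$, so \eqref{eq:0.24} holds; yet $E\otimes_k\bar{k}\cong L_{\bar{k}}\oplus{}^{\sigma}L_{\bar{k}}$ and $A(x)\otimes 1$ acts on the fibre of $L_{\bar{k}}$ by $\sqrt{d}$, so $q_{1}=\sqrt{d}\neq 0$ and $q_{2}=-\sqrt{d}$. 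Hence the equivalence you need between the $k$-criterion for $E_i$ and the $\bar{k}$-criterion for the $W_{i,l}$ is false, and Theorem \ref{thm:2} (or \cite[Theorem 1.3]{B} in characteristic zero) cannot be applied to $E_{\bar{k}}$ with the pulled-back residues; this is not a bookkeeping issue you can repair. Worse, since your Step (I) is correct, the example shows that no logarithmic connection with residue $A(x)$ exists although \eqref{eq:0.24} holds, so it puts pressure on the statement itself whenever the rigid residues are not ``split over $k$''; note that the paper's argument avoids your computation only by choosing residues upstairs on $\cat{E}$, and the corresponding difficulty reappears there in the unproved Remark \ref{rem:2} and in the step deducing that $\cat{E}$ satisfies the analogue of \eqref{eq:0.24}, where the block $B(y)$ is in fact forced to be $A(x)\vert_{\cat{E}(y)}$.
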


\section{Logarithmic connection and residues over a field}
Let $k$ be a field. Let $X$ be an irreducible smooth
projective curve over $k$. Let 
$$S := \{x_1, \ldots, x_m\}$$ be a finite subset of closed points of $X$ such that $x_i \neq x_j$ for $i \neq j$, and  
let $$Z : = x_1 + \ldots + x_m$$ denote the reduce effective 
divisor associated with $S$. Let $\Omega^1_X$ denote the 
cotangent bundle of $X$. Let $E$ be a vector bundle 
over $X$. A logarithmic connection  on $E$ singular over $S$ is a $k$-linear map 
\begin{equation}
\label{eq:3}
D : E \to  E \otimes
\Omega^1_X \otimes \struct{X}(Z)  
\end{equation}
which satisfies the Leibniz identity
\begin{equation}
\label{eq:4}
D(f s)= f D(s) + df \otimes s,
\end{equation}
where $f$ is a local section of \struct{X} and $s$ is a 
local section of $E$.

We shall give an equivalent definition of a logarithmic 
connection in terms of splitting of logarithmic Atiyah exact sequence. 

Let $\Diff[1]{X}{E}{E}$ be the vector bundle over $X$
whose sections over any open subset $U \subset X$ are the  differential operators on $E\vert_U$ of order at most one. Let 
\begin{equation*}
\sigma_1 : \Diff[1]{X}{E}{E} \longrightarrow TX \otimes
\ENd{E}
\end{equation*}
be the symbol operator, where $TX$ be the tangent bundle
of $X$. The symbol operator $\sigma_1$ is surjective.
Consider the {\bf symbol exact sequence} 
\begin{equation}
\label{eq:5}
0 \to \ENd{E} \xrightarrow{\iota} \Diff[1]{X}{E}{E} \xrightarrow{\sigma_1} TX \otimes\ENd{E} \to 0.
\end{equation}
Let $\struct{X}(-Z) \subset \struct{X}$ be the line 
bundle associated to the divisor $-Z$.
Then $$\struct{X}(-Z) \otimes TX \otimes \ENd{E} \subset TX \otimes \ENd{E}.$$
Define a vector bundle on $X$ as follows.
$$\At{E}(- \log Z ) := \sigma_1^{-1}(\id{E} \otimes TX \otimes \struct{X}(-Z)).$$
From the symbol exact sequence \eqref{eq:5}, we get 
a short exact sequence 
\begin{equation}
\label{eq:6}
0 \to \ENd{E} \xrightarrow{\iota} \At{E}(- \log Z ) \xrightarrow{\widetilde{\sigma_1}} TX(- \log Z) \to 0,
\end{equation}
called the {\bf logarithmic Atiyah exact sequence},
where $\widetilde{\sigma_1}$ is the restriction of $\sigma_1$ and $TX (- \log Z) := TX \otimes \struct{X}(-Z).$

Now, $E$ admits a logarithmic connection singular over 
$S$ if and only if the logarithmic Atiyah exact sequence 
\eqref{eq:6} splits algebraically, that is, there exists
an $\struct{X}$-linear homomorphism 
\begin{equation}
\label{eq:7}
\alpha : TX(- \log Z) \to \At{E}(- \log Z)
\end{equation}
such that 
$$\widetilde{\sigma_1} \circ \alpha = \id{TX(- \log Z)}.$$

Next we will define residue of a logarithmic connection
in $E$ at $x \in S$. Fix $x \in S$, and let $U$ be an open subset of $X$ such that $U \cap S = \{x\}$.
Let $s$ be an algebraic  section of $\At{E}(- \log Z )$
over $U$. Then $\widetilde{\sigma_1}(s)$ will be a 
section of $TX(- \log Z)$ over $U$. Now, evaluating 
$\widetilde{\sigma_1}(s)$ at $x$, we get 
$$\widetilde{\sigma_1}(s)(x) = \widetilde{\sigma_1}_x(s(x)) = 0.$$ 
Since $\Ker {\widetilde{\sigma_1}} = \ENd{E}$, we get 
a $k$-linear map
\begin{equation}
\label{eq:8}
j_x : \At{E}(- \log Z )(x) \to \ENd{E}(x) = \ENd{E(x)}.
\end{equation}
Note that 
for any $x \in S$, the fiber $\Omega^1_X 
\otimes \struct{X}(Z)(x)$ is canonically identified 
with $k$ by sending a rational $1$-form to its residue at 
$x$.  

Given a logarithmic connection $D$ on $E$ singular over
$S$, we get a unique algebraic splitting $\alpha$ of
\eqref{eq:6} corresponding to $D$, and thus
we define the residue of $D$ at $x \in S$
by
\begin{equation}
\label{eq:10}
Res(D,x) := j_x(\alpha_x(1)) \in \ENd{E(x)},
\end{equation}
where $1 \in k$ and the fiber $TX(- \log Z)(x)$ is canonically identified 
with $k$ as described above.

Next, we describe logarithmic connections with 
prescribed residues.  For that first notice the following.
\begin{lemma}
\label{lemm:1}
For every $x \in S$,  the fibre $\At{E}(- \log Z )(x)$
has the canonical decomposition
\begin{equation}
\label{eq:11}
\At{E}(- \log Z )(x) = \ENd{E(x)} \oplus k
\end{equation}
\end{lemma}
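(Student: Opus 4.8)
The plan is to obtain the decomposition by applying the fibre functor $-\otimes_{\struct{X}}k(x)$ to the logarithmic Atiyah exact sequence~\eqref{eq:6} and then exhibiting a \emph{canonical} splitting of the resulting sequence of $k$-vector spaces. Every sheaf in~\eqref{eq:6} is locally free, and in particular the quotient $TX(-\log Z)$ is; hence~\eqref{eq:6} splits locally over $X$, so tensoring it with $k(x)$ (equivalently, using $\Tor[1]{\struct{X}}{TX(-\log Z)}{k(x)}=0$) preserves exactness and yields
\begin{equation*}
0 \to \ENd{E(x)} \xrightarrow{\iota_x} \At{E}(- \log Z )(x) \xrightarrow{\widetilde{\sigma_1}_x} TX(- \log Z)(x) \to 0 .
\end{equation*}
Since $TX(-\log Z)=TX\otimes\struct{X}(-Z)$ is the $\struct{X}$-dual of $\Omega^1_X\otimes\struct{X}(Z)$, its fibre $TX(-\log Z)(x)$ is the $k$-dual of $(\Omega^1_X\otimes\struct{X}(Z))(x)$, which is canonically $k$ by the residue (as recalled just before the lemma). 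Thus $TX(-\log Z)(x)$ is canonically $k$, and the statement reduces to constructing a canonical $k$-linear section of $\widetilde{\sigma_1}_x$.

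To build the section I would argue locally. Fix a local coordinate $t$ at $x$, so that $Z$ is cut out by $t$ near $x$, and a local trivialisation of $E$. Unwinding the definition $\At{E}(-\log Z)=\sigma_1^{-1}(\id{E}\otimes TX\otimes\struct{X}(-Z))$ shows that over this neighbourhood
$$\At{E}(- \log Z ) \;=\; \ENd{E}\;\oplus\;\struct{X}\cdot\big((t\partial_t)\otimes\id{E}\big)$$
as $\struct{X}$-modules, the second summand being the first-order operator with symbol $\id{E}\otimes(t\partial_t)$. Let $\delta_x\in\At{E}(-\log Z)(x)$ be the class of $(t\partial_t)\otimes\id{E}$ in the fibre. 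The crucial point is that $\delta_x$ depends on neither choice: a change of local coordinate or of trivialisation replaces $(t\partial_t)\otimes\id{E}$ by an operator differing from it by an element of $\mathfrak{m}_x\,\At{E}(-\log Z)$ — for the coordinate change because $u\partial_u=t\partial_t+t^2g(t)\partial_t$ with $g$ regular, for the trivialisation because the two copies of $t\partial_t$ differ by a zeroth-order operator vanishing at $x$ — so the image $\delta_x$ in the fibre is well defined. Finally $\widetilde{\sigma_1}\big((t\partial_t)\otimes\id{E}\big)=t\partial_t$, whose class in $TX(-\log Z)(x)$ is dual to the class of $dt/t$ in $(\Omega^1_X\otimes\struct{X}(Z))(x)$ and hence corresponds to $1\in k$ (because $\mathrm{res}_x(dt/t)=1$). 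Therefore $\widetilde{\sigma_1}_x(\delta_x)=1$, so $k\cdot\delta_x$ is a canonical complement to $\iota_x(\ENd{E(x)})$, and
\begin{equation*}
\At{E}(- \log Z )(x) \;=\; \iota_x(\ENd{E(x)}) \;\oplus\; k\cdot\delta_x \;\cong\; \ENd{E(x)}\oplus k .
\end{equation*}
(One checks that $\delta_x$ spans $\Ker{j_x}$, so this is exactly the splitting underlying the map $j_x$ of~\eqref{eq:8}; conversely, granting $j_x\circ\iota_x=\mathrm{id}$ directly from the construction of $j_x$, one gets $\At{E}(-\log Z)(x)=\iota_x(\ENd{E(x)})\oplus\Ker{j_x}$ with $\widetilde{\sigma_1}_x$ restricting to an isomorphism $\Ker{j_x}\xrightarrow{\;\sim\;}TX(-\log Z)(x)\cong k$, the same conclusion.)

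The only genuine content is the coordinate- and trivialisation-independence of $\delta_x$ (equivalently, that $j_x$ splits $\iota_x$); everything else is routine bookkeeping — keeping track that all the fibre sequences in sight remain exact, which is guaranteed by local freeness, and that the identifications of $TX(-\log Z)(x)$ with $k$ and of $\ENd{E}(x)$ with $\ENd{E(x)}$ are the canonical ones. I do not anticipate a serious obstacle.
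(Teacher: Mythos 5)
Your argument is correct. Note, though, that the paper does not prove this lemma at all: it simply cites \cite[Lemma 2.1]{B}, and the argument there is the ``dual'' of yours --- it constructs a canonical retraction of $\iota_x$ rather than a canonical section of $\widetilde{\sigma_1}_x$: since a local section $s$ of $\At{E}(-\log Z)$ has symbol lying in $TX\otimes\struct{X}(-Z)\otimes \id{E}$, which vanishes at $x\in S$ as an honest vector field, the value $(s\,u)(x)$ depends only on $u(x)$, so $s(x)$ acts on the fibre $E(x)$; this is exactly the map $j_x$ of \eqref{eq:8}, and $j_x\circ\iota_x=\mathrm{id}$ gives $\At{E}(-\log Z)(x)=\ENd{E(x)}\oplus\Ker{j_x}$ with $\Ker{j_x}\cong TX(-\log Z)(x)\cong k$ via $\widetilde{\sigma_1}$ and the residue. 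Your route instead exhibits the complementary line directly, as $k\cdot\delta_x$ with $\delta_x$ the fibre class of $(t\partial_t)\otimes\id{E}$, and your verifications (exactness of the fibre sequence by local freeness, $u\partial_u-t\partial_t\in\mathfrak{m}_x\,TX(-\log Z)$ under coordinate change, difference $t\,g^{-1}g'$ under a change of trivialisation, and $\mathrm{res}_x(dt/t)=1$ giving $\widetilde{\sigma_1}_x(\delta_x)=1$) are all sound; your parenthetical remark that $\delta_x$ spans $\Ker{j_x}$ shows the two splittings coincide. The intrinsic construction via $j_x$ is shorter and avoids checking independence of choices, while your local computation has the small advantage of making explicit why the distinguished complement is generated by the logarithmic Euler operator $t\partial_t\otimes\id{E}$, which is the same normalisation used later when residues are computed.
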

\begin{proof}
See \cite[Lemma 2.1]{B}.
\end{proof}

Fix $A(x) \in \ENd{E(x)}$ for every $x \in S$. 
Consider the one dimensional vector space over $k$
generated by the vector $(A(x), 1)$ in 
$\ENd{E(x)} \oplus k$, that is,
\begin{equation}
\label{eq:12}
l_x := k.(A(x), 1) \subset \ENd{E(x)} \oplus k = 
\At{E}(- \log Z )(x).
\end{equation}

Let $\cat{A}(E) \to X$ be the vector bundle
that fits in the following short exact sequence
\begin{equation}
\label{eq:13}
0 \to \cat{A}(E) \to \At{E}(- \log Z) \to \bigoplus_{x\in S} \frac{\At{E}(- \log Z)(x)}{l_x} \to 0.
\end{equation}
where $l_x$ is  constructed above.

From logarithmic Atiyah exact sequence \eqref{eq:6} we have the short exact sequence
\begin{equation}
\label{eq:14}
0 \to \ENd{E} \otimes \struct{X}(-Z) \xrightarrow{\iota} \cat{A}(E) \xrightarrow{\widehat{\sigma_1}} TX(- \log Z) \to 0,
\end{equation}
where $\widehat{\sigma_1}$ is the restriction of 
$\widetilde{\sigma_1}$.

\begin{lemma}
\label{lemm:2}
A logarithmic connection in $E$ with given residues
$A(x) \in \ENd{E(x)}$, for every $x \in S$ is an algebraic splitting of 
the short exact sequence \eqref{eq:14}, that is,
there exists a morphism
$$h : TX (- \log Z) \to \cat{A}(E)$$ such that 
$\widehat{\sigma_1} \circ h = \id{TX (- \log Z)} $.
\end{lemma}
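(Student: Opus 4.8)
The plan is to show that a logarithmic connection $D$ on $E$ with prescribed residues $A(x)$ is "the same data" as an algebraic splitting $h$ of \eqref{eq:14}. The starting point is the equivalence already recorded in the excerpt: a logarithmic connection singular over $S$ (with arbitrary residues) is an algebraic splitting $\alpha : TX(-\log Z) \to \At{E}(-\log Z)$ of the logarithmic Atiyah sequence \eqref{eq:6}. So I would first recall that datum, and then analyze what the residue condition $Res(D,x) = A(x)$ translates to in terms of $\alpha$ at each point $x \in S$. Using the definition \eqref{eq:10}, $Res(D,x) = j_x(\alpha_x(1))$, together with the canonical decomposition $\At{E}(-\log Z)(x) = \ENd{E(x)} \oplus k$ of Lemma \ref{lemm:1} (under which $j_x$ is the projection to the first factor and $\widetilde{\sigma_1}_x$ is essentially the projection to the second), the condition $\widetilde{\sigma_1}\circ\alpha = \id$ forces $\alpha_x(1)$ to have second coordinate $1$, and the residue condition forces its first coordinate to be $A(x)$. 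Hence $Res(D,x) = A(x)$ for all $x$ if and only if $\alpha_x(1) = (A(x),1) \in l_x$ for every $x \in S$.

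Next I would use this pointwise description to factor $\alpha$ through $\cat{A}(E)$. The defining sequence \eqref{eq:13} exhibits $\cat{A}(E)$ as the kernel of the evaluation-type surjection $\At{E}(-\log Z) \to \bigoplus_{x\in S} \At{E}(-\log Z)(x)/l_x$; concretely, a local section $s$ of $\At{E}(-\log Z)$ lies in $\cat{A}(E)$ precisely when, for each $x \in S$, the fiber value $s(x)$ lies in $l_x$. Given a splitting $\alpha$ of \eqref{eq:6} whose residues are the $A(x)$, the composite $TX(-\log Z) \xrightarrow{\alpha} \At{E}(-\log Z) \to \bigoplus_x \At{E}(-\log Z)(x)/l_x$ is a morphism from a line bundle that vanishes at each $x \in S$ (since $\alpha_x(1) \in l_x$ means its class in $\At{E}(-\log Z)(x)/l_x$ is zero), so... one must be slightly careful: vanishing at the points $x$ only gives that the composite factors through $TX(-\log Z) \otimes \struct{X}(-Z) = TX(-2Z)$, not that it is zero. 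I would instead argue directly on sheaves: restrict \eqref{eq:13} and \eqref{eq:6} over the open set $X \setminus S$, where $\cat{A}(E)$ and $\At{E}(-\log Z)$ agree and the two sequences coincide, so $\alpha$ already lands in $\cat{A}(E)$ there; then check that near each $x \in S$ the image of $\alpha$ lands in $\cat{A}(E)$ using the local description of $\cat{A}(E)$ as sections whose fiber at $x$ lies in $l_x$, which is exactly the residue condition. This produces $h : TX(-\log Z) \to \cat{A}(E)$ with $\iota$-compatible image, and $\widehat{\sigma_1}\circ h = \widetilde{\sigma_1}\circ\alpha = \id$.

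For the converse, given an algebraic splitting $h$ of \eqref{eq:14}, compose with the inclusion $\cat{A}(E) \hookrightarrow \At{E}(-\log Z)$ from \eqref{eq:13} to obtain $\alpha := (\text{incl})\circ h$, which is a splitting of \eqref{eq:6} because $\widehat{\sigma_1}$ is the restriction of $\widetilde{\sigma_1}$; hence $\alpha$ defines a logarithmic connection $D$ singular over $S$. Its residue at $x$ is $Res(D,x) = j_x(\alpha_x(1))$, and since $h_x(1) \in \cat{A}(E)(x)$, whose image in $\At{E}(-\log Z)(x) = \ENd{E(x)}\oplus k$ lies in $l_x = k\cdot(A(x),1)$, the splitting condition $\widehat{\sigma_1}\circ h = \id$ pins down the scalar as $1$, so $\alpha_x(1) = (A(x),1)$ and therefore $Res(D,x) = A(x)$.

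The main obstacle is the sheaf-level factorization in the second paragraph: one must not conflate "$\alpha_x(1)\in l_x$ for all $x$" with "$\alpha$ factors through $\cat{A}(E)$" via a naive vanishing argument, since a line-bundle map vanishing at a point only drops a twist by $\struct{X}(-x)$. The correct route is to verify the factorization locally against the explicit description of $\cat{A}(E)$ furnished by \eqref{eq:13}, namely that its fiber at each $x \in S$ is $l_x \subset \At{E}(-\log Z)(x)$ while away from $S$ it equals $\At{E}(-\log Z)$; once this local picture is in hand, both directions are essentially bookkeeping with Lemma \ref{lemm:1} and the compatibility of $\widehat{\sigma_1}$, $\widetilde{\sigma_1}$, $j_x$. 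I would also note that the whole argument is field-agnostic, so nothing about $k$ being perfect or of positive characteristic enters here; it is purely the translation lemma that later theorems will invoke.
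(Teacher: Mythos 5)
Your proposal is correct, and in fact it supplies more than the paper does: the paper states Lemma \ref{lemm:2} without any proof, treating it as the standard translation (it is the analogue of the corresponding lemma in \cite{B}), so your two-directional argument --- residue condition $\Liff$ $\alpha_x(1)=(A(x),1)\in l_x$ via Lemma \ref{lemm:1}, then factorization of $\alpha$ through $\cat{A}(E)$, and conversely composing $h$ with the inclusion $\cat{A}(E)\hookrightarrow \At{E}(-\log Z)$ and pinning the scalar to $1$ by the splitting condition --- is exactly the intended content. One small remark: the worry you raise about ``vanishing at $x$ only dropping a twist by $\struct{X}(-Z)$'' does not actually arise here, because the quotient in \eqref{eq:13} is a torsion sheaf supported on $S$ whose stalk at $x$ is $\At{E}(-\log Z)(x)/l_x$; the composite $TX(-\log Z)\to \At{E}(-\log Z)\to \bigoplus_{x\in S}\At{E}(-\log Z)(x)/l_x$ sends a local section $t$ to the classes $[\alpha_x(t(x))]$, which vanish as soon as $\alpha_x(1)\in l_x$, so the composite is literally the zero sheaf map and $\alpha$ factors through the kernel $\cat{A}(E)$. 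Your alternative route via the local description of $\cat{A}(E)$ (sections whose values at each $x\in S$ lie in $l_x$) is the same observation and is perfectly fine; the twist phenomenon would only be relevant if the target were locally free.
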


We recall the notion of rigid endomorphism.
Let $x \in X$. An endomorphism $\beta \in \ENd{E(x)}$
is said to be a rigid 
if $$\beta \circ \phi(x) = \phi(x) \circ \beta$$
for all $\phi \in \coh{0}{X}{\ENd{E}}$.
Now onwards we shall assume that the endomorphism
$A(x)$ is rigid for every $x \in S$.

Recall that a vector bundle $E$ over $X$ is said to be 
{\it decomposable} if there are vector bundles $F$ and 
$F'$ such that $\rk F >0$, $\rk F' > 0$ and
$$E \cong F \oplus F'.$$ 
A vector bundle is called {\bf indecomposable} if it
is not decomposable.

From \cite[p.315, Theorem 2]{A1},  any vector bundle $E$ over $X$ is isomorphic 
to a unique, up to reordering, direct sum of indecomposable vector bundles, and we call it 
Krull-Remak-Schmidt decomposition of $E$.
Let $$E = \bigoplus_{i = 1}^n E^i$$ be the 
Krull-Remak-Schmidt decomposition of $E$. Since 
$A(x) \in \ENd{E(x)} $ is rigid for every $x \in S$,
$$A(x)(E^i{(x)}) \subset E^i{(x)}.$$ The restriction 
of $A(x)$ on $E^i(x)$ is denoted by $A^i(x)$ for every 
$x \in S$ and for every $i = 1, \ldots, n$.
We use the notations as above for the following.
\begin{lemma}
\label{lemm:3}
$E$ admits a logarithmic connection singular over $S$
with residues $A(x)$ at every $x \in S$ if
and only if each indecomposable component $E^i$
admits a logarithmic connection $D^i$ with 
residue $A^i(x)$ at every $x \in S$.
\end{lemma}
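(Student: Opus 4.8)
The plan is to exploit the fact that a logarithmic connection on a direct sum can be restricted to, and assembled from, the summands by means of the canonical projections and inclusions, and that rigidity of the residues forces compatibility with the Krull--Remak--Schmidt decomposition. Write $E = \bigoplus_{i=1}^{n} E^i$, and let $q_i \colon E^i \hookrightarrow E$ and $p_i \colon E \twoheadrightarrow E^i$ be the canonical inclusion and projection, so that $p_i \circ q_j = 0$ for $i \neq j$, $p_i \circ q_i = \id{E^i}$, and $\sum_{i} q_i \circ p_i = \id{E}$; these are $\struct{X}$-linear maps. First I would record the elementary observation that, since $A(x)$ is rigid, it preserves each $E^i(x)$ (as already noted before the statement), so with respect to the fibrewise decomposition $E(x) = \bigoplus_i E^i(x)$ the endomorphism $A(x)$ is block diagonal, its $(i,i)$-block being exactly $A^i(x)$; equivalently $A(x) = \bigoplus_{i} A^i(x)$ inside $\ENd{E(x)}$.

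For the ``only if'' direction, suppose $D \colon E \to E \otimes \Omega^1_X \otimes \struct{X}(Z)$ is a logarithmic connection singular over $S$ with $Res(D,x) = A(x)$ for every $x \in S$. For each $i$ set $D^i := (p_i \otimes \mathrm{id}) \circ D \circ q_i$, a $k$-linear map $E^i \to E^i \otimes \Omega^1_X \otimes \struct{X}(Z)$. Using $p_i \circ q_i = \id{E^i}$ and the $\struct{X}$-linearity of $p_i$ and $q_i$, the Leibniz identity \eqref{eq:4} for $D$ yields the Leibniz identity for $D^i$, so each $D^i$ is a logarithmic connection on $E^i$ singular over $S$. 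To identify its residue, I would work in a local trivialization of $E$ near $x$ adapted to the decomposition (pick local frames of the $E^i$ and take their union), in which $p_i$ and $q_i$ are constant block matrices and $D = \diff + \Omega$ with $\Omega$ an $\ENd{E}$-valued $1$-form having at worst a logarithmic pole at $x$ and residue $A(x)$; then $D^i = \diff + \Omega_{ii}$ with $\Omega_{ii}$ the $(i,i)$-block of $\Omega$, whence $Res(D^i,x)$ is the $(i,i)$-block of $A(x)$, i.e. $A^i(x)$ by the opening observation. Equivalently, this follows from the functoriality of the logarithmic Atiyah sequence \eqref{eq:6} under $P \mapsto p_i \circ P \circ q_i$: this operation sends $\At{E}(-\log Z)$ to $\At{E^i}(-\log Z)$, commutes with the symbol maps $\widetilde{\sigma_1}$, restricts on $\ENd{E}$ to $\phi \mapsto p_i \phi q_i$, and therefore carries the splitting associated with $D$ to the splitting associated with $D^i$, so that \eqref{eq:10} gives $Res(D^i,x) = p_i(x)\, A(x)\, q_i(x) = A^i(x)$.

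For the ``if'' direction, suppose each $E^i$ carries a logarithmic connection $D^i$ singular over $S$ with $Res(D^i,x) = A^i(x)$ for every $x \in S$. Then $D := \bigoplus_{i=1}^{n} D^i$ is a logarithmic connection on $E = \bigoplus_i E^i$ singular over $S$, since the Leibniz identity \eqref{eq:4} holds on each summand. Its residue at $x$ is $\bigoplus_i Res(D^i,x) = \bigoplus_i A^i(x) = A(x)$, again by the opening observation. Hence $E$ admits a logarithmic connection singular over $S$ with residues $A(x)$.

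The only genuinely delicate point is the bookkeeping for residues: one must check that the residue of the block-diagonal part of $D$ is the corresponding block of $Res(D,x)$, and dually that the residue of a direct-sum connection is the direct sum of the residues. This is precisely where the adapted-trivialization computation (or, equivalently, the functoriality of \eqref{eq:6} and the formula \eqref{eq:10}) is needed, together with the rigidity of the $A(x)$, which guarantees that $A(x)$ has no off-diagonal blocks and thus that its diagonal blocks are exactly the prescribed residues $A^i(x)$. Everything else is formal manipulation of the Leibniz rule.
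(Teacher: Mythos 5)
Your proof is correct and follows essentially the same route as the paper: extract $D^i = (p_i\otimes \mathrm{id})\circ D\circ q_i$ for one direction and take $\bigoplus_i D^i$ for the other, using rigidity to see that $A(x)$ is block diagonal with blocks $A^i(x)$. The only difference is that you spell out the residue bookkeeping (via an adapted trivialization or functoriality of the logarithmic Atiyah sequence), which the paper simply asserts.
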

\begin{proof}
Let $\iota : E^i \to E$ be the inclusion map, and $q^i : E \to E^i$  the quotient map.
Let $$D : E \to E \otimes \Omega^1_X(\log Z)$$ be a logarithmic connection singular over $S$ with residue
$A(x)$ at every $x \in S$. Consider the composition
$$(q^i \otimes \id{\Omega^1_X(\log Z)}) \circ D \circ \iota : E^i \to E^i \otimes \Omega^1_X(\log Z),$$
which satisfies the Leibniz rule and singular over $S$
with residue $A^i(x)$ at every $x \in S$.
Conversely, given logarithmic connection $D^i$ on $E^i$
for every $1 \leq i \leq n$, singular over $S$
with residue $A^i(x)$ at every $x \in S$.
Then $\bigoplus_{i = 1}^n D^i$ gives a logarithmic connection on $E$ singular over $S$ with residue
$A(x)$ at every $x \in S$.
\end{proof}

\section{Criterion over an algebraically closed field of characteristic $p > 0$}
\label{alg. closed field}
In this section, we assume that $k$ is an algebraically closed filed of characteristic $p > 0$, and $X$  an irreducible 
smooth projective curve over $k$. 
\begin{theorem}
\label{thm:2}
Let $E$ be an algebraic vector bundle on $X$ defined over the algebraically closed field $k$ of characteristic 
$p > 0$. Then $E$ admits a logarithmic connection
singular over $S$ with residue $A(x)$ for every 
$x \in S$ if and only if every indecomposable component 
$F$ of $E$ satisfies the following condition
\begin{equation}
\label{eq:15}
\deg{F} + \sum_{x \in S} \tr{A(x)\vert_{F(x)}} \equiv 0 ~(\mbox{mod}~p),
\end{equation}
that is, the number $\deg{F} + \sum_{x \in S} \tr{A(x)\vert_{F(x)}} \in k$ is a multiple of $p$.
\end{theorem}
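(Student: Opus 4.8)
The plan is to reduce to the indecomposable case and then exploit the numerical criterion via a cohomological computation, following the strategy of \cite{B} but replacing the characteristic-zero vanishing argument with a counting argument that keeps track of degrees modulo $p$. By Lemma \ref{lemm:3}, it suffices to treat the case where $E$ itself is indecomposable, so assume $E$ is indecomposable and we must show that $E$ admits a logarithmic connection singular over $S$ with residue $A(x)$ if and only if $\deg E + \sum_{x\in S}\tr{A(x)}\equiv 0\ (\mathrm{mod}\ p)$.

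First I would translate the existence of a logarithmic connection with the prescribed residues into the splitting of the short exact sequence \eqref{eq:14} (Lemma \ref{lemm:2}). The obstruction to splitting \eqref{eq:14} lives in $\coh{1}{X}{\HOM{TX(-\log Z)}{\ENd{E}\otimes\struct{X}(-Z)}}$, which by duality (the curve is projective and smooth over the algebraically closed field $k$, so Serre duality applies) is dual to $\coh{0}{X}{\HOM{\ENd{E}\otimes\struct{X}(-Z)}{TX(-\log Z)}\otimes\Omega^1_X}$. Since $TX(-\log Z)=TX\otimes\struct{X}(-Z)$ and $\Omega^1_X=TX^{\vee}$, this $\coh{0}$ simplifies to $\coh{0}{X}{\ENd{E}^{\vee}}$, i.e.\ to the space of global endomorphisms of $E$ paired against a trace form. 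The key step is then to identify the image of the extension class of \eqref{eq:14} under the trace pairing $\End[X]{E}\times\coh{1}{X}{\ENd{E}}\to\coh{1}{X}{\struct{X}}\cong k$: I expect the standard computation (as in Atiyah's original argument, adapted to the logarithmic setting) to show that for $\phi\in\End[X]{E}$ the class \eqref{eq:14} pairs to a scalar multiple of $\deg(\phi)+\sum_{x\in S}\tr{A(x)\vert_{\text{appropriate piece}}}$, where for $E$ indecomposable every global endomorphism is of the form $\lambda\cdot\id{E}+N$ with $N$ nilpotent (again by \cite{A1}), so the pairing collapses to $\lambda\bigl(\deg E+\sum_{x\in S}\tr{A(x)}\bigr)$ up to a nonzero constant.

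Consequently, the extension class of \eqref{eq:14} is annihilated by the full trace pairing — hence is zero, since for indecomposable $E$ the pairing with $\id{E}$ alone detects it — precisely when $\deg E+\sum_{x\in S}\tr{A(x)}=0$ in $k$, which is the congruence \eqref{eq:15} by our convention on the image of the degree in $k$. One direction (existence $\Rightarrow$ congruence) can be argued more directly: if $h$ splits \eqref{eq:14} then composing with the quotient \eqref{eq:13} shows the ordinary logarithmic Atiyah class, restricted appropriately, forces $\deg E+\sum_{x\in S}\tr{A(x)}$ to vanish in $k$ by taking traces of the curvature/residue relation. The converse (congruence $\Rightarrow$ existence) is the substantive half and relies on the duality identification above together with the indecomposability of $E$ to conclude that vanishing of the single number $\deg E+\sum_{x\in S}\tr{A(x)}$ in $k$ kills the entire obstruction class.

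The main obstacle I anticipate is the cohomological bookkeeping in positive characteristic: one must check that Serre duality and the trace-pairing identification of the Atiyah class go through verbatim over an algebraically closed field of characteristic $p$ (they do, since these are general facts about coherent cohomology on smooth projective curves and do not use characteristic zero), and more delicately that the ``integer'' $\deg E+\sum_{x\in S}\tr{A(x)}$ genuinely enters the obstruction only through its reduction mod $p$ — this is automatic once one observes that $\coh{1}{X}{\struct{X}}$ is a $k$-vector space and the natural map $\Z\to k$ factors through $\Z/p\Z$, which is exactly the content of the convention stated after Theorem \ref{thm:0.2}. The rest is a matter of assembling Lemmas \ref{lemm:1}, \ref{lemm:2}, and \ref{lemm:3} with the duality computation.
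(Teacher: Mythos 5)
Your overall strategy---reduce to an indecomposable $E$ via Lemma \ref{lemm:3}, view the obstruction to splitting \eqref{eq:14} as a class in $\coh{1}{X}{\ENd{E}\otimes\Omega^1_X}$, dualize by Serre duality to a linear functional on $\coh{0}{X}{\ENd{E}}$, and use that every global endomorphism of an indecomposable bundle over the algebraically closed $k$ has the form $\lambda\id{E}+N$ with $N$ nilpotent---is exactly the paper's route. But there is a genuine gap at the center: the claim that for $\phi=\lambda\id{E}+N$ the pairing ``collapses to $\lambda\bigl(\deg{E}+\sum_{x\in S}\tr{A(x)}\bigr)$'' is precisely what must be proved, and you leave it at ``I expect the standard computation \ldots to show''. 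The paper establishes it by first proving the decomposition $\widetilde{\phi^A_E}=\widetilde{\at{E}}+\sum_{x\in S}\widetilde{\delta_x}(A(x))$, where $\delta_x$ is the coboundary of the residue sequence \eqref{eq:19}, and then evaluating separately on $\id{E}$ (via the cup-product-then-trace map $\Psi$, giving $\deg{E}+\sum_{x\in S}\tr{A(x)}$, with $\widetilde{\at{E}}(\id{E})=\deg{E}$ from \cite{BS}) and on nilpotent endomorphisms (giving $0$, using Atiyah's vanishing $\widetilde{\at{E}}(N)=0$).

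Crucially, the vanishing on nilpotents is where the rigidity of the residues enters, and your proposal never invokes rigidity at all. One needs $\widetilde{\delta_x}(A(x))(N)=\tr{A(x)\circ N(x)}=0$, which holds because $A(x)$ commutes with $N(x)$ (rigidity), so $A(x)\circ N(x)$ is nilpotent; and one needs $\widetilde{\phi^A_E}(N)=0$, which the paper obtains by showing the logarithmic Atiyah class comes from the subbundle $\ENd{E}^0$ of endomorphisms preserving the flag $\Ker{N^j}$, a flag that is $A(x)$-invariant again only because $A(x)$ commutes with $N(x)$. Without rigidity the asserted collapse is false in general, so this is not deferrable bookkeeping: it is the substantive content of the proof. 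By contrast, your concern about positive characteristic is correctly resolved---Serre duality and the trace identifications are characteristic-free, and the only role of $p$ is that the relevant number is the image of $\deg{E}+\sum_{x\in S}\tr{A(x)}$ in $k$, i.e.\ its reduction mod $p$, exactly as in the paper.
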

\begin{proof}
In view of Lemma \ref{lemm:3}, it is enough to 
prove the theorem for indecomposable vector bundles.
Without loss of generality assume that $E$ is an 
indecomposable vector bundle. Consider the 
short exact sequence \eqref{eq:14} associated with
$E$ and its extension class (called logarithmic Atiyah class) 
\begin{equation}
\label{eq:16}
\phi^A_E \in \coh{1}{X}{\ENd{E} \otimes \Omega^1_X}.
\end{equation}
By Serre duality, the logarithmic Atiyah class $\phi^A_E$ corresponds to an element 
\begin{equation}
\label{eq:17}
\widetilde{\phi^A_E} \in \coh{0}{X}{\ENd{E}}^*.
\end{equation}
Now, we shall construct a $k$-linear morphism
\begin{equation}
\label{eq:18}
\delta_x : \ENd{E(x)} \to \coh{1}{X}{\ENd{E} \otimes \Omega^1_X}
\end{equation}
of vector spaces for every $x \in S$. In fact, this morphism $\delta_x$ can be constructed for any $x \in X$.
Consider the short exact sequence

\begin{equation}
\label{eq:19}
0 \to \ENd{E} \otimes \Omega^1_X \rightarrow \struct{X}(x) \otimes \ENd{E} \otimes \Omega^1_X  \xrightarrow{Res_x} \ENd{E(x)} \to 0,
\end{equation}
 where the last map is given by sending logarithmic $1$-form with values in $\ENd{E}$ to its residue at 
$x$ with values in $\ENd{E(x)}$.
The morphism $\delta_x$ in \eqref{eq:18} is the coboundary 
operator in the long exact sequence of cohomology 
groups induced from the short exact sequence 
$\eqref{eq:19}$.
From Serre duality, and from $\delta_x$, we get an induced morphism
\begin{equation}
\label{eq:20}
\widetilde{\delta_x} : \ENd{E(x)} \to \coh{0}{X}{\ENd{E}}^*,
\end{equation}
such that 
$$\widetilde{\delta_x}(\beta)(\gamma) = \tr{\beta \circ \gamma(x)}.$$
Now, consider the Atiyah exact sequence (see \cite{A}
and \cite[Proposition 4.2]{BS1} for the proof) 
\begin{equation}
\label{eq:21}
0 \to \ENd{E} \xrightarrow{\iota} \At{E} \xrightarrow{\sigma_1} TX \to 0,
\end{equation}
and let $\at{E} \in \coh{1}{X}{\ENd{E} \otimes \Omega^1_X}$ denote the extension class,
known as Atiyah class. Again using Serre duality,
we have $\widetilde{\at{E}} \in \coh{0}{X}{\ENd{E}}^*$.
We claim that
\begin{equation}
\label{eq:22}
\widetilde{\phi^A_E} = \widetilde{\at{E}} + \sum_{x \in S} \widetilde{\delta_x}(A(x))
\end{equation}
Since $k$ is an algebraically closed field and $E$
is indecomposable,  any element $\theta \in \coh{0}{X}{\ENd{E}}$ is of the form 
$$\theta = \nu \id{E} + N,$$
where $\nu \in k$ and $N$ is a nilpotent endomorphism 
of $E$.
To prove \eqref{eq:22}, it is enough to verify the formula \eqref{eq:22}
for $\id{E}$ and $N$ separately. From
\cite[Proposition 18(ii)]{A},
it is know that $\widetilde{\at{E}}(N) = 0$.
Next, since $N \in \ENd{E}$ is nilpotent, $N_x \in \ENd{E(x)}$ is nilpotent. Since $A(x)$ is a rigid
endomorphism, $N(x)$ commutes with $N(x)$ and 
hence $A(x) \circ N(x)$ is nilpotent.
Thus, $$\widetilde{\delta_x}(A(x))(N) = \tr{A(x) \circ N(x)} = 0.$$
Let $E_{\bullet}$ be a flag 
$$0 = E_0 \subset E_1 \subset \ldots \subset E_l = E$$
of subbundles of $E$ such that $$A(x) (E_i(x)) \subset E_i(x),$$ for every $x \in S$ and for every $1 \leq i \leq l$.
Let $\ENd{E}^0$ be the $\struct{X}$-submodule of
$\ENd{E}$ consisting of endomorphisms which preserves
the flag $E_\bullet$.
Then $\ENd{E}^0$ is a subbundle of $\ENd{E}$.
The inclusion morphism $\iota :\ENd{E}^0 \hookrightarrow \ENd{E}$, induces a morphism 
$$\iota^* : \coh{1}{X}{\ENd{E}^0 \otimes \Omega^1_X} \to \coh{1}{X}{\ENd{E} \otimes \Omega^1_X}.$$ It can be shown that 
the logarithmic Atiyah class $\phi^A_E$ is in the image 
of $\iota^*$.

Moreover, for a nilpotent endomorphism $N$, the subbundles $\Ker{N^j}$ of $E$ for $j \geq 1$, form a flag of 
$E$ such that $$A(x) (\Ker{N^j}(x)) \subset \Ker{N^j}(x), $$ because
$A(x) \circ N^j(x) = N^j(x) \circ A(x)$ for every
$j \geq 1$. We take $\ENd{E}^0$ to be associated with the flag  $\Ker{N^j}$, $j \geq 1$. Then, from above observation $\widetilde{\phi^A_E}(N) = 0$.
Thus, \eqref{eq:22}  satisfies for $N$.

Consider the following composition of morphisms
\begin{align*}
\coh{0}{X}{\ENd{E}} \otimes \coh{1}{X}{\ENd{E} \otimes \Omega^1_X} \xrightarrow{\cup} \coh{1}{X}{\ENd{E} \otimes \ENd{E} \otimes \Omega^1_X} \\
 \xrightarrow{m^*} \coh{1}{X}{\ENd{E} \otimes \Omega^1_X} \xrightarrow{\mbox{trace}} \coh{1}{X}{\Omega^1_X} = k,
\end{align*}
and denote it by $\Psi$, where the first morphism is the
cup product, second morphism $m^*$ is induced from
$$m : \ENd{E} \otimes \ENd{E} \to \ENd{E} $$
which is composition of endomorphisms of $E$.

Under this composition $\Psi$, the image of 
$\id{E} \otimes \phi^A_E$ coincides with $\deg{E} + \sum_{x\in S} \tr{A(x)} \in k$, that is,
$$\widetilde{\phi^A_E} (\id{E}) = \deg{E} + \sum_{x \in S} \tr{A(x)} .$$

Also, under the same morphism $\Psi$, the image of 
$\at{E}$ coincides with $\deg{E}$(see \cite[Proposition 3.1]{BS}), and hence 
$$\widetilde{\at{E}}(\id{E}) = \deg{E}.$$

As observed above, $\widetilde{\delta_x}(A(x))(\id{E}) = \tr{A(x) \circ \id{E}} = \tr{A(x)}$.
Thus, \eqref{eq:22} verifies for $\id{E}$, and hence completes the proof of the claim.

Thus, $E$ admits a logarithmic connection singular over
$S$ with given residue $A(x)$ for every $x \in S$
if and only if the obstruction class $\phi^A_E$ vanishes
which is equivalent to $\widetilde{\phi^A_E}(\id{E}) = 0$, that is, 
$\deg{E} + \sum_{x \in S} \tr{A(x)}$ vanishes in $k$,
which is nothing but
\begin{equation*}
\deg{E} + \sum_{x \in S} \tr{A(x)} \equiv 0 ~(\mbox{mod}~p).
\end{equation*}
This completes the proof of the theorem.
\end{proof}
\begin{remark}\mbox{}
\label{rem:1}
In the proof of the Theorem \ref{thm:2}, we observed 
that $\widetilde{\phi^A_E} (\id{E}) \in k$ corresponds 
to $\deg{E} + \sum_{x \in S} \tr{A(x)} \in k$. In
fact, this is true for every field $k$. Therefore,
if a vector bundle $E$ on an irreducible smooth projective curve $X$ defined over any field $k$ admits 
a logarithmic connection singular over $S$ with
residue $A(x)$ for every $x \in S$, then the number $$\deg{E} + \sum_{x \in S} \tr{A(x)}$$
is a multiple of the characteristic of $k$.
\end{remark}

\section{Criterion over a perfect field}
\label{perfect field}
In this section, we prove an analogous result, when the
base field fails to be algebraically closed. In this section, we will assume that
$k$ is a perfect field of arbitrary characteristic,
and $X$ is an irreducible smooth projective
curve defined over $k$.
\begin{theorem}
\label{thm:3}
Let $k$ be a perfect field of characteristic $p$. Let $E$ be a vector bundle 
on an irreducible smooth projective curve $X$
over $k$. Then, we have 
\begin{enumerate}
\item \label{a}
Assume that $p >0$, and suppose that 
rank of each indecomposable components of $E$ is not 
divisible by $p$. Then $E$ admits a logarithmic connection singular over $S$ with residue $A(x)$ for 
every $x \in S$ if and only if for every indecomposable 
component $F$ of $E$ satisfies
\begin{equation}
\label{eq:23}
\deg{F} + \sum_{x \in S} \tr{A(x)\vert_{F(x)}} \equiv 0 ~(\mbox{mod}~p),
\end{equation}
\item \label{b} If for every indecomposable component $F$ of $E$
satisfies 
\begin{equation}
\label{eq:24}
\deg{F} + \sum_{x \in S} \tr{A(x)\vert_{F(x)}} = 0 ,
\end{equation}
then $E$ admits a logarithmic connection singular 
over $S$ with residue $A(x)$ for every $x \in S$.
\end{enumerate}
\end{theorem}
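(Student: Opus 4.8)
The plan is to reduce the perfect‑field case to the algebraically closed case already handled in Theorem \ref{thm:2} (and its underlying formula, Remark \ref{rem:1}), by base change to the algebraic closure $\bar k$ and Galois descent. Throughout we may assume, by Lemma \ref{lemm:3}, that $E$ is indecomposable. Write $\pi\colon X_{\bar k} := X \times_k \bar k \to X$ for the base change, and $E_{\bar k} := \pi^* E$. The residues $A(x)$ pull back as follows: a closed point $x\in S$ with residue field a finite separable extension of $k$ splits in $X_{\bar k}$ into finitely many $\bar k$‑points $y_1,\dots,y_r$ lying over $x$, and the endomorphism $A(x) \in \ENd{E(x)}$ induces an endomorphism $A_{\bar k}(y_j) \in \ENd{E_{\bar k}(y_j)}$ at each; rigidity of $A(x)$ is preserved because $\coh{0}{X_{\bar k}}{\ENd{E_{\bar k}}} = \coh{0}{X}{\ENd{E}} \otimes_k \bar k$ (flat base change). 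A logarithmic connection on $E$ singular over $S$ with residues $A(x)$ pulls back to one on $E_{\bar k}$ singular over $\pi^{-1}(S)$ with the corresponding residues; conversely, a Galois‑invariant such connection on $E_{\bar k}$ descends. Since $k$ is perfect, $\mathrm{Gal}(\bar k/k)$ acts and the splitting obstruction lives in a $\bar k$‑vector space on which this action is semilinear, so descent is governed by the usual $\mathrm{H}^1$ of $\mathrm{Gal}(\bar k/k)$ with coefficients in a vector space, which vanishes.

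The key steps, in order: \textbf{(1)} Establish the base‑change compatibilities above: $\At{E}(-\log Z)$ and $\cat{A}(E)$ commute with $\pi^*$, the logarithmic Atiyah class $\phi^A_E$ pulls back to $\phi^{A_{\bar k}}_{E_{\bar k}}$, and under Serre duality the pairing in Theorem \ref{thm:2}'s proof is compatible with $-\otimes_k\bar k$. \textbf{(2)} Over $\bar k$, decompose $E_{\bar k} = \bigoplus_\alpha F_\alpha$ into indecomposables; by Theorem \ref{thm:2} each $F_\alpha$ admits the desired logarithmic connection iff $\deg F_\alpha + \sum \tr{A_{\bar k}(y)\vert_{F_\alpha(y)}} \equiv 0 \pmod p$. \textbf{(3)} Relate the numerical invariants of the $F_\alpha$ to that of $E$: since $E$ is indecomposable over $k$, the Galois group permutes the $F_\alpha$ transitively among the summands in each Galois orbit, so $\deg E + \sum_{x\in S}\tr{A(x)\vert_{E(x)}}$ equals the sum over a Galois orbit of the invariant of one $F_\alpha$, counted with the orbit size — this is where the hypothesis on ranks in part \eqref{a} enters. \textbf{(4)} For part \eqref{b}, when the integer $\deg F + \sum \tr{A(x)\vert_{F(x)}}$ is literally zero (not just zero mod $p$), each $\bar k$‑summand $F_\alpha$ has invariant $0$ as well (the invariant of $F_\alpha$ over $\bar k$ is $1/(\#\text{orbit})$ times that of $E$ when summed appropriately, but more directly each $F_\alpha$ in a single orbit has the same integer invariant, and transitivity forces it to be $(\deg E + \dots)/(\text{orbit size}) \cdot (\text{something})$; in any case vanishing of the integer propagates), so each $F_\alpha$ admits a connection over $\bar k$; then run Galois descent. \textbf{(5)} Galois descent: the set of splittings of the sequence \eqref{eq:14} for $E_{\bar k}$ is a torsor under $\coh{0}{X_{\bar k}}{\ENd{E_{\bar k}}\otimes\Omega^1_{X_{\bar k}}(\log Z)}$, a finite‑dimensional $\bar k$‑vector space with semilinear Galois action; averaging / the vanishing of Galois $\mathrm{H}^1$ of a vector space produces a Galois‑invariant splitting, which descends to the required connection on $E$.

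\textbf{The main obstacle} will be step \textbf{(3)}–\textbf{(4)}: controlling how the Krull–Remak–Schmidt decomposition over $\bar k$ refines the indecomposable $E$ over $k$, and how the residue‑trace terms distribute over the Galois orbit. The point is that $\mathrm{End}(E)$ is a (possibly noncommutative) finite‑dimensional $k$‑algebra, local because $E$ is indecomposable, and $\mathrm{End}(E)\otimes_k\bar k$ may fail to be local — its decomposition into a product of local rings corresponds exactly to the Galois orbit of $\bar k$‑indecomposable summands of $E_{\bar k}$. One must check that all summands in one orbit are Galois‑conjugate, hence have equal rank $s$ (so $\mathrm{rk}\,E = s\cdot(\#\text{orbit})$, explaining why the condition "$p \nmid \mathrm{rk}\,E$" forces $p\nmid \#\text{orbit}$ and lets one pass the congruence \eqref{eq:23} from $E$ to a single $F_\alpha$), and equal numerical invariant $\deg F_\alpha + \sum\tr{A_{\bar k}(\cdot)\vert_{F_\alpha}}$, whose sum over the orbit is $\deg E + \sum_{x\in S}\tr{A(x)\vert_{E(x)}}$. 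For part \eqref{b} no hypothesis on ranks is needed because vanishing of the integer sum over the whole orbit, together with all orbit members sharing the same integer invariant, forces that common value to be zero; for part \eqref{a} the congruence only transfers after dividing by the orbit size, which is invertible mod $p$ precisely under the stated rank hypothesis. Once this bookkeeping is in place, steps \textbf{(1)}, \textbf{(2)}, \textbf{(5)} are routine flat base change, an appeal to Theorem \ref{thm:2}, and standard Galois descent for vector spaces.
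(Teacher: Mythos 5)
Your strategy --- pull back to $\bar{k}$, apply Theorem \ref{thm:2} to the indecomposable summands of $E_{\bar{k}}$, and descend --- is genuinely different from the paper's, and it breaks at exactly the step you flag as the main obstacle, namely (3)--(4). The indecomposable summands $F_\alpha$ of $E_{\bar{k}}$ are indeed permuted by $\mathrm{Gal}(\bar{k}/k)$ (up to multiplicities, which your formula $\rk{E}=s\cdot\#\mathrm{orbit}$ ignores), but their invariants are \emph{not} equal across an orbit: the degree parts agree, while the residue-trace parts $\sum_{y}\tr{A_{\bar{k}}(y)\vert_{F_\alpha(y)}}$ are Galois \emph{conjugates} of one another in $\bar{k}$. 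The sum over the orbit is essentially a field trace, and a trace can vanish (or vanish mod $p$) without any individual conjugate vanishing; hence neither the ``divide by the orbit size'' step in (a) nor the ``the common value must be zero'' step in (b) is legitimate. Concretely: let $K/k$ be separable quadratic, $L$ a degree-zero line bundle on $X_K$ with $L\not\cong L^{\sigma}$, $E=\pi_*L$ (indecomposable, with $\coh{0}{X}{\ENd{E}}\cong K$), $x\in S$ a $k$-rational point, and $A(x)$ multiplication by a trace-zero element $\mu\in K^{\times}$ (this is rigid). Then $\deg{E}+\tr{A(x)}=0$, yet the two summands of $E_{\bar{k}}$ carry \emph{forced} pulled-back residues whose traces are the two conjugates of $\mu$, both nonzero; so the hypothesis on $E$ does not hand you the hypotheses of Theorem \ref{thm:2} for each $F_\alpha$, and your bridge between the two levels is missing (indeed false as stated).

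This is precisely why the paper does not argue by pullback. It invokes the Tillmann/Arason--Elman--Jacob theorem to write the indecomposable $E$ as $\pi_*\cat{E}$ with $\cat{E}$ absolutely indecomposable over $X_K$ for a finite extension $K/k$, transfers the numerical condition through $\deg{E}=d\cdot\deg{\cat{E}}$ and $\rk{E}=d\cdot\rk{\cat{E}}$ (the rank hypothesis in (a) is used exactly to make $d$ prime to $p$), and --- crucially --- \emph{chooses} the residues $B(y)$ upstairs, constrained only by a trace compatibility, so that $\cat{E}$ satisfies the algebraically closed criterion after the further base change to $\bar{K}$ (where vanishing of the obstruction class descends by injectivity of $-\otimes_K\bar{K}$); the connection is then pushed down by Lemma \ref{lemm:4} and Remark \ref{rem:2}. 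In your picture the residues on $E_{\bar{k}}$ are pullbacks of $A(x)$ and admit no such freedom. Two smaller points: your descent step (5) is anyway better done by observing that the obstruction class lives in the $k$-vector space $\coh{1}{X}{\ENd{E}\otimes\Omega^1_X}$ and maps injectively under $-\otimes_k\bar{k}$, so no Galois cohomology or averaging is needed; and for part (b) in characteristic zero Theorem \ref{thm:2} is not available, so over $\bar{k}$ you would have to appeal to the criterion of \cite{B} instead.
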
 

In view of the Remark \ref{rem:1}, in the first part \eqref{a} of the above Theorem \ref{thm:3}, one 
direction is obvious, that is, if $E$ admits 
a logarithmic connection singular over $S$ with
residue $A(x)$ for every $x \in S$, then $$\deg{F} + \sum_{x \in S} \tr{A(x)\vert_{F(x)}} \equiv 0 ~(\mbox{mod}~p).$$
The hard part is to prove the converse of it. Also,
notice that the converse of the second part \eqref{b}
of the Theorem \ref{thm:3} is not true.
In order to prove above Theorem \ref{thm:3}, we need the notion of {\bf absolutely indecomposable} vector bundles
over $X$ and some key properties of finite extensions
of perfect fields.

Let $K$ be a finite extension of $k$.
Let $$X_K = X \times_{\mbox{spec(k)}} \mbox{spec(K)} $$
be the  curve obtained from base change and let
\begin{equation}
\label{eq:25}
\pi : X_K \longrightarrow X
\end{equation}
be the natural 
projection. Consider the subset $S = \{x_1, \ldots, x_m\}$  of $X$ as above.
Then $$\widetilde{S} := \pi^{-1}(S) =\{\pi^{-1}(x_1), \ldots, \pi^{-1}(x_m)\}$$ will be a subset consists of distinct closed 
points of $X_K$. For simplicity we set 
$y_j = \pi^{-1}(x_j)$ for every $1 \leq j \leq m$.
 Let $$\widetilde{Z} := y_1 + \cdots + y_m$$ be the reduced
 effective divisor associated with $\widetilde{S}$.
 
 Let $\cat{F}$ be a vector bundle over $X_K$, and let
 \begin{equation}
 \label{eq:26}
 F : = \pi_* \cat{F}
 \end{equation}
  be the direct image of 
 the vector bundle $\cat{F}$ under $\pi$. Then $F$ is a 
 vector bundle over $X$. Let
 $$d := \deg{\pi} = [K:k]$$ denote the 
 degree of the extension $K\vert k$. Then
 \begin{equation}
 \label{eq:27}
 \deg{F} = d \cdot \deg{\cat{F}},
 \end{equation}
and we get a relation between their ranks
\begin{equation}
\label{eq:28}
\rk{F} = d \cdot \rk{\cat{F}}.
\end{equation}
Under above notations we have 
\begin{lemma}
\label{lemm:4}
If the vector bundle $\cat{F}$ over $X_K$ admits a logarithmic connection 
 singular over $\widetilde{S}$, then the vector bundle $F$ $(\mbox{defined}~in~\eqref{eq:26})$
over $X$ admits 
a logarithmic connection singular over $S$.
\end{lemma}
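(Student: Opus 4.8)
The plan is to push the given connection forward along $\pi$, exploiting that $\pi$ is finite and \'etale. Since $k$ is perfect, the finite extension $K/k$ is separable, so the projection $\pi \colon X_K \to X$ of \eqref{eq:25} is finite \'etale. In particular $\pi$ is unramified, whence the scheme-theoretic preimage $\pi^{-1}(Z)$ coincides with the reduced divisor $\widetilde{Z}$ and $\struct{X_K}(\widetilde{Z}) \cong \pi^{*}\struct{X}(Z)$; and $\pi$ is smooth, so the canonical morphism $\pi^{*}\Omega^1_X \to \Omega^1_{X_K}$ is an isomorphism. Combining these gives a canonical isomorphism
\[
\Omega^1_{X_K} \otimes \struct{X_K}(\widetilde{Z}) \;\cong\; \pi^{*}\left(\Omega^1_X \otimes \struct{X}(Z)\right).
\]

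Now let $D \colon \cat{F} \to \cat{F} \otimes \Omega^1_{X_K} \otimes \struct{X_K}(\widetilde{Z})$ be a logarithmic connection singular over $\widetilde{S}$. Via the displayed isomorphism, $D$ may be regarded as a $k$-linear map $\cat{F} \to \cat{F} \otimes \pi^{*}\mathcal{L}$, where $\mathcal{L} := \Omega^1_X \otimes \struct{X}(Z)$ is a line bundle. Applying the direct image $\pi_{*}$ and the projection formula $\pi_{*}(\cat{F} \otimes \pi^{*}\mathcal{L}) \cong (\pi_{*}\cat{F}) \otimes \mathcal{L}$ --- valid since $\mathcal{L}$ is locally free and $\pi$ is finite --- produces a $k$-linear map $\pi_{*}D \colon F \to F \otimes \Omega^1_X \otimes \struct{X}(Z)$, where $F = \pi_{*}\cat{F}$. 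It then remains to verify the Leibniz rule \eqref{eq:4} for $\pi_{*}D$. For an open set $U \subset X$, a section $g$ of $\struct{X}$ over $U$ and a section $s$ of $F$ over $U$ --- that is, a section of $\cat{F}$ over $\pi^{-1}(U)$ --- the $\struct{X}$-module structure on $F$ is given by multiplication by $\pi^{*}g$, so $(\pi_{*}D)(gs) = D((\pi^{*}g)s) = (\pi^{*}g)\,D(s) + d(\pi^{*}g) \otimes s$. Since $d(\pi^{*}g) = \pi^{*}(dg)$ under $\pi^{*}\Omega^1_X \cong \Omega^1_{X_K}$, transporting back through the projection formula turns the right-hand side into $g\,(\pi_{*}D)(s) + dg \otimes s$. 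Hence $\pi_{*}D$ is a logarithmic connection on $F$ singular over $S$.

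The one point needing genuine care is the compatibility of the chain of canonical identifications used above ($\pi^{*}\Omega^1_X \cong \Omega^1_{X_K}$, $\pi^{*}\struct{X}(Z) \cong \struct{X_K}(\widetilde{Z})$, the projection formula, and $d \circ \pi^{*} = \pi^{*} \circ d$), so that the Leibniz computation is valid literally rather than merely up to an unspecified isomorphism; the remaining ingredients --- finiteness and \'etaleness of $\pi$, and the behaviour of $\pi_{*}$ on quasi-coherent sheaves --- are standard. Alternatively, one could deduce the lemma from the splitting criterion for the logarithmic Atiyah sequence \eqref{eq:6}, pushing a splitting over $X_K$ forward and using the unit $\struct{X} \to \pi_{*}\struct{X_K}$; but the direct computation above is shorter.
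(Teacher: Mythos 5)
Your proof is correct and follows essentially the same route as the paper: identify $\Omega^1_{X_K}(\log\widetilde{Z})$ with $\pi^*(\Omega^1_X\otimes\struct{X}(Z))$, push $D$ forward using the projection formula, and check the Leibniz rule via the $\struct{X}$-module structure $g\mapsto g\circ\pi$ on $\pi_*\cat{F}$. The only difference is that you spell out the justification (separability of $K/k$, hence \'etaleness of $\pi$) for the identifications that the paper uses without comment.
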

\begin{proof}
Let $$D : \cat{F} \to \cat{F} \otimes \Omega^1_{X_K}
\otimes \struct{X_K}(-\widetilde{Z}) = \cat{F} \otimes 
\Omega^1_{X_K}(\log \widetilde{Z})$$
be a logarithmic connection on $\cat{F}$ singular over
$\widetilde{S}$.

Let $U \subset X$ be an open subset, and 
$s \in \cat{F}(\pi^{-1}(U))$. Then $D(s)$ be a local 
section of $\cat{F} \otimes \Omega^1_{X_K}(\log \widetilde{Z})$ over 
$\pi^{-1}(U)$. This implies that $D(s)$ is a local section of $\pi_*(\cat{F} \otimes 
\Omega^1_{X_K}(\log \widetilde{Z}))$ over $U$.
Since $$\pi^* (\Omega^1_X(\log Z)) = \Omega^1_{X_K}(\log \widetilde{Z}),$$ from the projection formula, we get

\begin{align*}
\pi_*(\cat{F} \otimes \Omega^1_{X_K}(\log\widetilde{Z}))
& = \pi_*(\cat{F} \otimes \pi^* (\Omega^1_X(\log Z)) \\
& = \pi_*(\cat{F}) \otimes \Omega^1_X(\log Z)\\
& = F \otimes \Omega^1_X(\log Z).
\end{align*}
Thus, $D(s)$ gives a local section of $F \otimes \Omega^1_X(\log Z)$ over $U$.

We define an operator $$\nabla : F \to 
F \otimes \Omega^1_X(\log Z)$$ by 
$$\nabla(\hat{s}) := D(s),$$
where $\hat{s} \in F(U)$, a section given by  
$s \in (\pi_*\cat{F})(U) = F(U)$, for every open subset
$U \subset X$. 

For any $f \in \struct{X}(U)$, $d(f \circ \pi) = \pi^* df \in \Omega^1_{X_K}(\log \widetilde{Z})(U)$, where $d : \struct{X} \to \Omega^1_X(\log Z)$ is the 
universal derivation. The operator $\nabla$ satisfies the following property (Leibniz rule).
\begin{align*}
\nabla (f \cdot \hat{s}) & = D ((f \circ \pi) \cdot s) \\
& = (f \circ \pi) D(s) + d(f \circ \pi) \otimes s \\
& = f \nabla(\hat{s}) +  df \otimes \hat{s}.
\end{align*}

Thus, $\nabla$ is a logarithmic connection on $F$
singular over $S$.
 \end{proof}

\begin{remark}\mbox{}
\label{rem:2}
In the above Lemma \ref{lemm:4}, we can choose residue 
$B(y)$ for every $y \in \widetilde{S}$,
for the logarithmic connection $D$ on $\cat{F}$ singular 
over $\widetilde{S}$, such that it maps to the 
given residue $A(x)$ for every $x \in S$, for the
induced logarithmic connection $\nabla$ on $F$  singular over $S$,
under $\pi$.
\end{remark}

Let $Y$ be a geometrically irreducible projective curve defined over a field $l$.
For the following definition of absolutely indecomposable vector bundle, see \cite{AEJ} and \cite[Definition 2.2]{B06}.

 A vector bundle $\cat{E}$ over $Y$ is called {\bf absolutely
indecomposable} if there is an algebraic closure $\bar{l}$ of $l$ such that the corresponding
vector bundle $\cat{E} \otimes_{l} \bar{l}$ over $Y \times_{l} \bar{l}$ is indecomposable.

In fact, the
condition that a vector bundle over $Y$ is absolutely indecomposable does not depend on the choice of the algebraic closure $\bar{l}$ (see the first paragraph  on  \cite[p.n. 86]{B06}).

\begin{proof}[\bf Proof of Theorem \ref{thm:3}]
In view of Lemma \ref{lemm:3}, it is enough to show the 
theorem for indecomposable vector bundles. To prove the 
first part \eqref{a}, let $k$ be a perfect field of 
characteristic $p > 0$, and $E$ be an indecomposable vector bundle over $X$
such that
\begin{equation}
\label{eq:28.5}
\deg{E} + \sum_{x \in S} \tr{A(x)} \equiv 0 ~(\mbox{mod}~p),
\end{equation}
where $A(x)$ is given rigid endomorphism on $E(x)$, for 
every $x \in S$.

Since $k$
is a perfect field, from \cite[Theorem 1.8, 4]{AEJ} there is a finite field extension $K$ of $k$ with the following property.

There is an absolutely indecomposable vector bundle $\\cat{E}$ over $$X_K := X \times_{k} K $$ such that  
\begin{equation}
\label{eq:29.5}
\pi_*\cat{E} \cong  E,
\end{equation}
where $\pi : X_K \to X$ is the
natural projection. The above result is due to A.
Tillmann\cite{T} as mentioned in \cite{AEJ}. 

In view of \eqref{eq:28},
for every $y \in \widetilde{S}$, choose endomorphism $B(y) \in \cat{E}(y) \cong K^{\rk{\cat{E}}}$ such that 
\begin{equation}
\label{eq:29}
d \cdot \tr{B(y)} = \tr{A(x)},
\end{equation}
where $\pi (y) = x$, and  $d$ the degree of extension $K \vert k$.

Note that $\tr{B(y)} \in K$ and $\tr{A(x)} \in k$,
so we consider $K$ as a vector space over $k$, and we take trace of $\tr{B(x)} \in K$ to get \eqref{eq:29}.

From \eqref{eq:28.5}, and \eqref{eq:27}, we have
\begin{equation}
\label{eq:30}
d \cdot \deg{\cat{E}} + \sum_{y \in \widetilde{S}} d \cdot \tr{B(y)} 
\equiv 0 ~(\mbox{mod}~p).
\end{equation}
Since $\rk{E}$ is not divisible by $p$, from 
\eqref{eq:28}, $d$ is coprime to $p$, and hence 
\eqref{eq:30} becomes,
\begin{equation}
\label{eq:31}
\deg{\cat{E}} + \sum_{y \in \widetilde{S}}  \tr{B(y)} 
\equiv 0 ~(\mbox{mod}~p).
\end{equation}

Since $\cat{E}$ is an absolutely indecomposable vector
bundle over $X_K$, by above definition 
$$\cat{E}_{\bar{K}} := \cat{E} \otimes_K{\bar{K}}$$ is 
indecomposable vector bundle 
over $X_{\bar{K}}:= X_K \times_K \bar{K}$, where 
$\bar{K}$ is an algebraic closure of $K$.

Let $\widehat{\pi} : X_{\bar{K}} \longrightarrow X_K$ 
be the 
natural projection. Choose a subset
 $$\widehat{S} = \{\hat{y_1}, \ldots, \hat{y_m}\} 
 \subset X_{\bar{K}}$$ such that 
 $\widehat{\pi}(\hat{y_j}) = y_j$
 for every $j = 1, \ldots, m$. 
 Notice that $\deg{\cat{E}_{\bar{K}}} = \deg{\cat{E}}$.
 From above observation,
for every $\hat{y} \in \widehat{S}$, 
choose endomorphisms
$\widehat{B(\hat{y})}$ on $\cat{E}_{\bar{K}}(\hat{y})$
such that $\tr{\widehat{B(\hat{y})}} = \tr{B(y)}$.
Thus, the equation \eqref{eq:31} becomes
\begin{equation}
\label{eq:32}
\deg{\cat{E}_{\bar{K}}} + \sum_{\hat{y} \in \widehat{S}}  
\tr{\widehat{B(\hat{y})}}
\equiv 0 ~(\mbox{mod}~p),
\end{equation}
 and from Theorem \ref{thm:2}, $\cat{E}_{\bar{K}}$
 admits a logarithmic connection singular over 
 $\widehat{S}$ with residue $\widehat{B(\hat{y})}$ for 
 every $\hat{y} \in \widehat{S}$.
 
Consider the short exact sequence \eqref{eq:14}
for the vector bundle $\cat{E}$ over $X_K$ and  for the 
given endomorphism $B(y) \in \ENd{\cat{E}(y)}$ for 
every $y \in \widetilde{S}$, that is,
\begin{equation}
\label{eq:33}
0 \to \ENd{\cat{E}} \otimes \struct{X_K}(- \widetilde{Z}) \xrightarrow{\iota} \cat{A}(\cat{E}) \xrightarrow{\widehat{\sigma_1}} TX_{K}(- \log \widetilde{Z}) \to 0.
\end{equation}
Let $\phi^B_{\cat{E}} \in \coh{1}{X_K}{\ENd{\cat{E}} \otimes \Omega^1_{X_K}}$ be the extension class of 
\eqref{eq:33}.

Changing the base from $K$ to $\bar{K}$, and using the base change formula, we get a short exact sequence 
\begin{equation}
\label{eq:34}
0 \to \ENd{\cat{E}} \otimes \struct{X_K}(- \widetilde{Z}) \otimes_{K}
 \bar{K} \xrightarrow{\iota} \cat{A}(\cat{E}) \otimes_{K} \bar{K} \xrightarrow{\widehat{\sigma_1}} TX_{K}(- \log \widetilde{Z}) \otimes_{K} \bar{K} \to 0
\end{equation}
of vector bundles over $X_{\bar{K}}$.
Let $\widehat{\phi^B_{\cat{E}}}$ be the extension class
of \eqref{eq:34}, and  note that 
$$\coh{1}{X_{\bar{K}}}{\ENd{\cat{E}} \otimes \Omega^1_{X_K} \otimes \bar{K}} = 
\coh{1}{X_K}{\ENd{\cat{E}} \otimes \Omega^1_{X_K}} \otimes \bar{K},$$
and 
\begin{equation}
\label{eq:35}
\widehat{\phi^B_{\cat{E}}} = \phi^B_{\cat{E}} \otimes 1.
\end{equation}
Next,
the short exact sequence 
\eqref{eq:34} coincides with the short exact sequence 
\eqref{eq:14} associated with the vector bundle
$\cat{E}_{\bar{K}}$ over $X_{\bar{K}}$ with endomorphism
$\widehat{B(\hat{y})}$ on $\cat{E}_{\bar{K}}(\hat{y})$,
for every $\hat{y} \in \widehat{S}$, that is,
\begin{equation*}
0 \to \ENd{\cat{E}_{\bar{K}}} \otimes \struct{X_{\bar{K}}}(- \widehat{Z}) \xrightarrow{\iota} \cat{A}(\cat{E}_{\bar{K}}) \xrightarrow{\widehat{\sigma_1}} TX_{\bar{K}}(- \log \widehat{Z}) \to 0.
\end{equation*}
Therefore, the extension class $\phi^{\widehat{B(\hat{y})}}_{\cat{E}_{\bar{K}}}$ coincides with $\widehat{\phi^B_{\cat{E}}}$.

Since $\cat{E}_{\bar{K}}$
 admits a logarithmic connection singular over 
 $\widehat{S}$ with residue $\widehat{B(\hat{y})}$ for 
 every $\hat{y} \in \widehat{S}$, we have 
 $$0 = \phi^{\widehat{B(\hat{y})}}_{\cat{E}_{\bar{K}}} =\widehat{\phi^B_{\cat{E}}}.$$
Thus, from \eqref{eq:35}, $$\phi^B_{\cat{E}} \otimes 1 = 0,$$
and hence $\phi^B_{\cat{E}} = 0$. In view of Lemma 
\ref{lemm:4}, and Remark \ref{rem:2}, proof of $\eqref{a}$ is complete.

To prove the second part \eqref{b}, let $E$ be an
indecomposable vector bundle over $X$ which satisfies \eqref{eq:24}. Using the same technique as above, the 
vector bundle $\cat{E}$ over $X_K$ in \eqref{eq:29.5},
satisfies \eqref{eq:24}. Now, repeating the above 
argument, we conclude that $\cat{E}$ admits a logarithmic connection singular over $\widetilde{S}$
with residue $B(y)$, for every $y \in \widetilde{S}$.
Again, from Lemma \ref{lemm:4}, and Remark \ref{rem:2},
proof of the second part \eqref{b} of the theorem is 
complete. 
\end{proof}

\section{Conclusions}
The above theorems have been proved over any  perfect field and they will be very useful when we study the algebro-geometric invariants
for the moduli space of logarithmic connections 
over a perfect field, with fixed rigid residues.

In case residues are not rigid, 
finding a suitable criterion for the existence of a logarithmic connection is still an open problem.
Our guess is the same criterion should work but we do not 
know how to prove it.

\end{document}